\documentclass[11pt,a4paper]{article}
\usepackage{a4wide,amsfonts,amsmath,latexsym,amsthm,amssymb,euscript,eufrak,graphicx,units,mathrsfs,setspace,stmaryrd,xcolor}

\usepackage{float}
\newfloat{figure}{H}{lof}
\floatname{figure}{\figurename}

\usepackage[colorlinks=true,
            linkcolor=blue,
            citecolor=blue,
            urlcolor=blue]{hyperref}
\usepackage[french,english]{babel}
\usepackage[T1]{fontenc}
\usepackage[utf8]{inputenc}

\DeclareMathAlphabet{\eufrak}{U}{}{}{}  % Euler fraktur math
\SetMathAlphabet\eufrak{normal}{U}{euf}{m}{n}
\SetMathAlphabet\eufrak{bold}{U}{euf}{b}{n}

\newtheorem{prop}{Proposition}[section]

\newtheorem{theorem}[prop]{Theorem}
\newtheorem{lemma}[prop]{Lemma}
\newtheorem{corollary}[prop]{Corollary}

\newtheorem{assumption}[prop]{Assumption}

\theoremstyle{definition}
\newtheorem{example}[prop]{Example}
\newtheorem{remark}[prop]{Remark}

\newtheorem{definition}[prop]{Definition}

\numberwithin{equation}{section}

\def\E{\mathbb{E}}
\def\P{\mathbb{P}}
\def\real{\mathbb{R}}

\def\1{\textbf{1}}

\newcommand{\eps}{\varepsilon}

\newcommand{\be}{\begin{equation}}
\newcommand{\ee}{\end{equation}}
\newcommand{\bde}{\begin{displaymath}}
\newcommand{\ede}{\end{displaymath}}
\newcommand{\beq}{\begin{eqnarray*}}
\newcommand{\eeq}{\end{eqnarray*}}
\newcommand{\beqa}{\begin{eqnarray}}
\newcommand{\eeqa}{\end{eqnarray}}
\newcommand{\bel }{\left\{\begin{array}{ll}}
\newcommand{\eel}{\cr \end{array} \right.}

\newcommand{\bex}{\begin{ex} \rm }
\newcommand{\eex}{\end{ex}}

\def\R{\mathbb R}

\def\E{\mathbb E}

\def\P{\mathbb P}

\definecolor{ying}{rgb}{0.8, 0.0, 0.04}

\DeclareSymbolFontAlphabet{\mathrsfs}{rsfs}

\author{Caroline Hillairet\footnote{ENSAE  IP Paris, CREST UMR 9194, 5 avenue Henry Le Chatelier, 91120 Palaiseau, France.  Email: \texttt{caroline.hillairet@ensae.fr}} \and Thibaut Mastrolia\footnote{Department of Industrial Engineering and Operations Research UC Berkeley, 4141 Etcheverry Hall, Berkeley, CA 94720, United States.  Email: \texttt{mastrolia@berkeley.edu}} \and Anthony R\'eveillac\footnote{INSA de Toulouse, IMT UMR CNRS 5219, Universit\'e de Toulouse, 135 avenue de Rangueil 31077 Toulouse Cedex 4, France. \; Email: \texttt{anthony.reveillac@insa-toulouse.fr}} }

\title{Martingale Representation in Counting-Process Filtrations via Poisson Embedding  \footnote{The authors acknowledge the Banff International Research Station for its support during the Focused Research Group 26frg015 ``Stochastic Control of Hawkes-Driven Dynamical Systems with Applications to Cyber Risk Management". Caroline Hillairet's research benefited from the support
of the Chair Stress Test, RISK Management and Financial Steering, led by the French Ecole
Polytechnique and its Foundation and sponsored by BNP Paribas.}}

\begin{document}
\maketitle

\begin{abstract}
We revisit martingale representation for counting processes using the Poisson embedding, deriving the representation in their natural filtration from that of the underlying Poisson random measure. Our approach combines Poisson Malliavin calculus with the Clark-Ocone and Mecke formulas. We first characterize measurability with respect to the counting-process filtration through the structure of the Malliavin derivative. We show that any random variable measurable in the counting-process filtration is invariant with respect to the auxiliary mark on the acceptance region obtained from the Poisson embedding method and vanishes on the rejection region. We then prove that every square-integrable martingale in this smaller filtration consequently admits a unique stochastic integral representation with respect to the compensated counting process: the integrand is explicitly characterized  as the  conditional expectation of the  Malliavin derivative, which identifies as the conditional effect of an additional accepted event on the terminal functional. Additionally, we define the  mirroring  counting process which counts the rejected events in the Poisson embedding procedure and we then provide an orthogonal decomposition of any martingale in the joint filtration of the accepted and rejected counting processes as a sum of a martingale in the counting filtration obtained by Poisson embedding and its mirroring martingale part on the rejected domain.   Finally, we apply these results to general Hawkes processes constructed by Poisson embedding, obtaining a martingale representation and an explicit integrand formula in their natural filtration.
\end{abstract}

\section{Introduction}

 A martingale is a process whose current value is the best predictor, in the mean-square sense, of its future value given the information currently available. Martingale representation theorems express its evolution as a stochastic integral with a predictable integrand with respect to the driving noises. Beyond their structural role in stochastic
analysis, these representations are central to stochastic control, representation of Backward Stochastic Differential Equation or hedging strategy in finance. 
In particular, they allow backward equations to be formulated using
the observable sources of randomness with some closed-form for the integrand by using Malliavin calculus, known as Clark-Ocone formula. For systems driven by point
processes, this leads naturally to integration with respect to compensated
counting processes. Such formulations are particularly useful in
non-Markovian control problems, where the intensity of a point process (if it exists)   and the objective criterion related to an optimization problem
may depend on the entire observed history; see, for instance,
\cite{confortola2014backward}.

The representation theory for point-process filtrations is classical and
Jacod~\cite{jacod1975multivariate} established general results for multivariate
point processes, subsequently developed in the framework of
Jacod and Shiryaev~\cite{jacod2013limit}. 
More precisely, Theorem~III.4.37 of the latter reference concerns
a marked point process with random measure $\mu^X$ on $ \mathbb R_+ \times E$ (with $(E, \mathcal E)$ an auxiliary Blackwell space) and predictable
compensator $\nu^X$, and  considers the smallest right-continuous filtration
making $\mu^X$ optional.
 In this filtration, every local martingale admits
a representation 
\[
    P_t=P_0+
    \int_{(0,t]\times E}W_s(x)\,
    (\mu^X-\nu^X)(\mathrm{d}s,\mathrm{d}x),
\]
for a predictable integrand $W$ such that for $x\in E$, $W_\cdot(x)$ is locally integrable. 
In particular, if $C$ is a non-explosive simple counting process with
predictable intensity $\lambda$ in its natural filtration $\mathbb F^C$,
every square-integrable $\mathbb F^C$-martingale $P$ admits the representation
\[
 P_t=P_0+\int_0^t U_s\bigl(dC_s-\lambda_s\,ds\bigr),
 \text{ with } 
 \mathbb E\!\left[\int_0^T |U_s|^2\lambda_s\,ds\right]<\infty,
\]
for any finite time horizon $T>0$,
where $U$ is $\mathbb F^H$-predictable and unique up to
$\lambda_s(\omega)\,ds\,\mathbb P(d\omega)$-almost everywhere equality.
The development of this theory in the 1970s includes the work \cite{ching2006representation, boel1975martingales}  alongside Jacod's general treatment of
multivariate point processes~\cite{jacod1975MRP}; see also
Jacod and Shiryaev~\cite[Chapter~III, Section~4]{jacod2013limit}.
These results formulate representation intrinsically in terms of a point
process, its filtration, and its compensator. This abstract framework however
does not itself specify an underlying Poisson noise nor identify the
integrand through the effect of perturbing that noise. Although it already
supports changes of measure, additional structure
is needed to connect the representation to a fixed Poisson reference
construction in order to build a tractable stochastic control theory, especially in weak formulation with change of measures indexed on a reference benchmark, see for example \cite{confortola2014backward}.

We provide this connection for counting processes constructed by Poisson
embedding. On a canonical space equipped with the Poisson law $\mathbb P$,
let $\mathbf N$ be a Poisson random measure with intensity $dt\,d\theta$.
We consider the thinning equation
\[
C_t=\int_{(0,t]\times\mathbb R_+}
       \mathbf 1_{\{\theta\leq\lambda_s\}}\,\mathbf N(ds,d\theta),
 \qquad
 \lambda_t=\Lambda\bigl(t,C|_{[0,t)}\bigr),
\] where $\Lambda$ is a positive function of the time $t$ and the path on  $[0,t)$ of the point process $C$, with integrability and pathwise well-posedness
assumptions specified in Section~\ref{sec:problem}; see Daley and Vere-Jones \cite[Chapter 14.7]{daley2008introduction} for this general construction or 
Br\'emaud and Massouli\'e~\cite{bremaud1996stability} for this construction
in the Hawkes setting. The observed filtration $\mathbb F^C$ is smaller
than the Poisson filtration $\mathbb F^{\mathbf N}$: it records accepted
event times, but neither rejected points nor auxiliary marks.
Consequently, the Poisson representation theorem alone does not identify
a representation integrand predictable with respect to the observed
history. We fix this issue by using Malliavin calculus for point processes, see \cite{Last2016, Privault_2009}. The advantage of our approach is twofold. First, we deduce the martingale
representation property in $\mathbb F^C$ directly from the predictable
representation property of $\mathbf N$ in $\mathbb F^{\mathbf N}$,
without invoking a general point-process representation theorem as in \cite{jacod2013limit}.
We show that the Poisson integrand associated with a terminal functional
of $C$ vanishes on rejected points and reduces on accepted points to an
$\mathbb F^C$-predictable process. Second, Malliavin calculus gives access to the representation integrand.
For a terminal functional $F$ of $C$, the Poisson difference operator
$D_{t,\theta}F:=F\circ\varepsilon^+_{t,\theta}-F$ measures the effect
of inserting an atom at $(t,\theta)$. Rejected atoms have no effect,
whereas the effect of an accepted atom does not depend on its auxiliary
mark. Combining this structure with the Poisson Clark-Ocone
formula~\cite{last2017penrose} yields, for
square integrable and $\mathcal F_T^C$-measurable random variable $F$ in the Poisson
Malliavin-Sobolev domain,
\[
 Z_{t,\theta}
   =\mathbb E[D_{t,\theta}F\mid\mathcal F^{\mathbf N}_{t-}]
     =\mathbf 1_{\{\theta\leq\lambda_t\}}U_t,\text{ with }
 U_t=\mathbb E[D_{t,0}F\mid\mathcal F^C_{t-}],
\]
where $Z$ is the integrand with respect to the compensated Poisson process
$\widetilde{\mathbf N}=\mathbf N-dt\,d\theta$, and $U$ is the integrand
with respect to  the compensated point process $dC_t-\lambda_t\,dt$. Thus $U$ is identified as the conditional
effect of an additional accepted event on the terminal functional.
The representation itself holds for every square-integrable
$\mathbb F^C$-martingale, without this additional Malliavin regularity.
This complements the Poisson-based Malliavin analysis
in~\cite{HHKR}, the pseudo-chaotic expansions
of~\cite{hillairet2024chaotic}, and the non-compensated Clark-Ocone
formula of~\cite{hillairet2024poisson}.

Hawkes processes~\cite{Hawkes,hawkes1974cluster} provide a natural example.
For a general Hawkes process $H$ with empty initial history,
\[
 \lambda_t=\mu+\Psi\Big(\int_{(0,t)}\Phi(t-s)\,dH_s\Big),
 \qquad \mu>0,
\]
where $\Psi:\mathbb R\longrightarrow \mathbb R_+$ and a kernel function $\Phi$ satisfy the assumptions of
Section~\ref{sec:application}, inserting an accepted event changes both the observed path
and the intensity of subsequent events. The Malliavin derivative
therefore accounts for this propagation through the event history.
Our general theorem applies with $C=H$, yielding the representation
in $\mathbb F^H$ and the explicit identification of its integrand.
This supplies a basis for studying backward equations and control
problems with history-dependent intensities. The structure of the paper is the following. Section~\ref{sec:problem} presents the framework, Section~\ref{sec:FN} proves the representation
theorem and Section~\ref{sec:application} treats the particular example of Hawkes processes. In addition, Section \ref{sec:ortho} provides 
a decomposition result of martingales in the joint filtration of the accepted and rejected counting processes into two orthogonal martingales measurable with respect to C or its counting complement  denoted by $\bar C,$ which counts the rejected events in the thinning procedure
\section{Problem setting and Malliavin calculus for point processes}\label{sec:problem}

\subsection{Poisson embedding and thinning construction of a point process}
\label{subsec:hawkes-thinning}

Let $\mathbb R_+:=[0,\infty)$ and
$\mathbb X:=\mathbb R_+\times\mathbb R_+$.
We fix a finite horizon $T>0$. The coordinates $x=(t,\theta) \in \mathbb X$ are interpreted as
time and an auxiliary mark. We consider the configuration space
\[
\Omega
:=
\left\{
\omega=\sum_{i\geq 1}\delta_{(t_i,\theta_i)}:
\omega(K)<+\infty
\text{ for every compact }K\subset\mathbb X
\right\},
\]
endowed with the $\sigma$-field $\mathcal F$ associated with the vague topology. We equip the canonical configuration space with the Poisson law
$\mathbb P$ of intensity $dt\,d\theta$. 
Throughout this work, for a sub-$\sigma$-field $\mathcal A\subseteq\mathcal F$, we denote by
$L^2(\Omega,\mathcal A,\mathbb P)$ the space of real-valued,
$\mathcal A$-measurable random variables $F$ satisfying
$\mathbb E[|F|^2]<\infty$, identified up to $\mathbb P$-almost sure
equality, and equipped with the norm
\[
\|F\|_{L^2(\Omega,\mathcal A,\mathbb P)}
:=
\bigl(\mathbb E[|F|^2]\bigr)^{1/2}.
\]
For a filtration $\mathbb G=(\mathcal G_t)_{t\in[0,T]}$, we denote
by $\mathcal P(\mathbb G)$ the predictable $\sigma$-field on
$[0,T]\times\Omega$, generated by all real-valued, left-continuous,
$\mathbb G$-adapted processes. Equivalently,
\[
\mathcal P(\mathbb G)
=
\sigma\!\left(
\bigl\{\{0\}\times A:A\in\mathcal G_0\bigr\}
\cup
\bigl\{(s,t]\times A:
0\le s<t\le T,\ A\in\mathcal G_s\bigr\}
\right).
\]
We also set
\[
\widehat{\mathcal P}(\mathbb G)
:=
\mathcal P(\mathbb G)\otimes\mathcal B(\mathbb R_+).
\]
On $(\Omega,\mathcal F,\P)$, let $\mathbf N$ denote the canonical Poisson random
measure,
\[
\mathbf N(\omega;A):=\omega(A),
\qquad
A\in\mathcal B(\mathbb X),
\]
with intensity measure $dt\,d\theta$. Thus, for every Borel set
$A\subset\mathbb X$ with finite Lebesgue measure, $\mathbf N(A)$ is Poisson
distributed with parameter $\int_A dt\,d\theta,$
and the random variables associated with disjoint sets are independent.  We denote by $\mathbb F^{\mathbf N}:=(\mathcal F_t^{\mathbf N})_{t\in [0,T]}$ the completed,
right-continuous augmentation of the natural filtration of $\mathbf N$, that is  for $t \in [0,T]$,
\[
\mathcal F_t^{\mathbf N}:=\sigma\left\{
\mathbf N((0,s]\times B):
0\leq s\leq t,\;
B\in\mathcal B(\mathbb R_+)\text{ bounded}
\right\},
\]
completed with the $\P$-negligible sets. We introduce the compensated Poisson random measure
\[
\widetilde{\mathbf N}(dt,d\theta)
:=
\mathbf N(dt,d\theta)-dt\,d\theta.
\]
%Note that a process $\lambda:[0,T]\times\Omega\to[0,\infty)$ is $\mathbb F^{\mathbf N}$-predictable if it is $\mathcal P(\mathbb F^{\mathbf N})$-measurable. 
We set $\lambda:[0,T]\times\Omega\to(0,\infty)$ 
a finite-valued process,  $\mathbb F^{\mathbf N}$-predictable that is measurable with respect to
the predictable $\sigma$-field $\mathcal{P}(\mathbb{F}^{\mathbf N})$. We define the simple counting process $C$ through the thinning procedure (also called Poisson embedding)
\begin{equation}\label{eq:thinning}
    C_t
    :=
    \int_{(0,t]\times\mathbb{R}_+}
    \mathbf{1}_{\{\theta\leq\lambda_s\}}\,\mathbf {N}(ds,d\theta),
    \qquad t\in [0,T],
\end{equation}
and let $\mathbb{F}^C$ denote the usual augmentation of
the natural filtration of $C$. Throughout the paper, the following assumption is in force.
\begin{assumption}\label{ass:lambda}
 $\lambda$ is
$\mathbb{F}^C$-predictable and satisfies \\
(i) $\sup\limits_{t\in[0,T]}
    \mathbb{E}[\lambda_t]<\infty$, \\
(ii)  there exists a deterministic constant $\mu>0$ such that:  \quad$
\lambda_t(\omega)\ge\mu,
\quad dt\otimes d\mathbb P\text{-a.e.}
$
\end{assumption}
 
\begin{remark}\label{rem:path} Since  $\lambda$  is $\mathbb F^C-$predictable, it  
can be specified by a measurable non-anticipative functional $\Lambda$:
\[
\lambda_t(\omega)
=
\Lambda\bigl(t,C(\omega)|_{[0,t)}\bigr).
\]
More precisely, if $\mathsf D$ denotes the space of locally finite
counting paths, the map
\[
(t,c)\longmapsto \Lambda\bigl(t,c|_{[0,t)}\bigr)\in(0,\infty)
\]
is measurable with respect to the predictable $\sigma$-field
of the canonical filtration on $\mathsf D$. We assume that the thinning equation
\[
C_t(\omega)
=
\int_{(0,t]\times\mathbb R_+}
\mathbf1_{\{
\theta\le
\Lambda(s,C(\omega)|_{[0,s)})
\}}
\,\omega(ds,d\theta)
\]
admits a measurable, non-explosive, pathwise unique solution. %An example of such process is the Hawkes process %defined below in Example \ref{example} given by the Br\'emaud-Massouli\'e (thinning) representation.
\end{remark}

\begin{example}[Linear Hawkes intensity]\label{example}
A particular example is the linear Hawkes process given by the Br\'emaud-Massouli\'e (thinning) representation, and whose intensity is given by 
\[
    \lambda_t
    =
    \mu+\int_{(0,t)}\Phi(t-s)\,dC_s,
\]
where $\mu>0$ and $\Phi:[0,T]\to\mathbb{R}_+$
is bounded  with
$\|\Phi\|_{L^1([0,T])}<1$, see \cite{ogata1981lewis,bremaud1996stability}.
\end{example}

In the thinning procedure $\eqref{eq:thinning}$, at each atom $(s,\theta)$ of $\mathbf N$, the point is retained precisely when
$\theta\leq\lambda_s$; the retained points are the jumps of $C$. 
Since $C$ is $\mathbb{F}^{\mathbf N}$-adapted, we have
$\mathcal{F}_t^C\subseteq\mathcal{F}_t^{\mathbf N}$ for every $t\in [0,T]$.
Moreover, under Assumption \ref{ass:lambda}
\[
    \mathbb{E}[C_T]
    =
    \mathbb{E}\!\left[\int_0^T\lambda_t\,dt\right]
    <\infty.
\]
Consequently, $C$ is non-explosive, and
\[
    M_t^C:=C_t-\int_0^t\lambda_s\,ds= \int_{(0,t]\times\R_+}
\mathbf 1_{\{\theta\leq\lambda_s\}}
\widetilde{\mathbf N}(ds,d\theta),
\qquad t\geq 0,
\]
is a square-integrable martingale in both
$\mathbb{F}^{\mathbf N}$ and $\mathbb{F}^C$.
Indeed, $M^C$ is $\mathbb F^C-$adapted since $\lambda$ is $\mathbb F^C-$predictable. Therefore,  using $\mathcal F_s^C\subseteq\mathcal F_s^{\mathbf N}$ and the tower property, we have for $0\leq s\leq t$
$$
\E\left[M_t^C\mid\mathcal F_s^C\right]
=
\E\left[
\E\left[M_t^C\mid\mathcal F_s^{\mathbf N}\right]
\,\middle|\,
\mathcal F_s^C
\right] =
\E\left[M_s^C\mid\mathcal F_s^C\right]
=
M_s^C.
$$ Hence, $M^C$ is also an $\mathbb F^C$-martingale.
As a consequence, the process
$\int_0^\cdot\lambda_s ds$ is the $\mathbb F^C$-compensator of $C$ and  $\lambda$ its  intensity.
Since $C$ is a simple counting process, the predictable quadratic
variation of $M^C$ in the filtration $\mathbb F^C$ is therefore
$$
\langle M^C\rangle_t
=
\int_0^t\lambda_s\,ds,
\qquad t\geq 0.
$$

\begin{definition}
For a filtration $\mathbb G$, let
\[
\mathbb L^2(\mathbf N;\mathbb G)
:=
\left\{
Z:[0,T]\times\Omega\times\mathbb R_+\to\mathbb R:
\begin{array}{l}
Z \text{ is }\widehat{\mathcal P}(\mathbb G)
\text{-measurable},\\[1mm]
\displaystyle
\mathbb E\!\left[
\int_0^T\int_{\mathbb R_+}|Z_{t,\theta}|^2\,d\theta\,dt
\right]<\infty
\end{array}
\right\},
\]
where processes are identified up to
$dt\otimes d\mathbb P\otimes d\theta$-almost everywhere equality. We also define
\[
L_C^2
:=
\left\{
U:[0,T]\times\Omega\to\mathbb R:
U \text{ is }\mathcal P(\mathbb F^C)\text{-measurable},
\quad
\mathbb E\!\left[\int_0^T |U_t|^2\lambda_t\,dt\right]<\infty
\right\},
\]
where processes are identified up to equality almost everywhere
with respect to the measure $\lambda_t(\omega)\,dt\,\mathbb P(d\omega)$.
These spaces are equipped with their respective norms
\[
\|Z\|_{\mathbb L^2(\mathbf N;\mathbb G)}^2
:=
\mathbb E\!\left[
\int_0^T\int_{\mathbb R_+}|Z_{t,\theta}|^2\,d\theta\,dt
\right],
\qquad
\|U\|_{L_C^2}^2
:=
\mathbb E\!\left[\int_0^T |U_t|^2\lambda_t\,dt\right].
\]
\end{definition}
Since $\lambda$ is $\mathbb F^C$-predictable and
$\mathbb F^C\subseteq\mathbb F^{\mathbf N}$, the map
\[
\iota:L_C^2\longrightarrow
\mathbb L^2(\mathbf N;\mathbb F^{\mathbf N}),
\qquad
(\iota U)_{t,\theta}:=U_t\mathbf 1_{\{\theta\le\lambda_t\}},
\]
is a linear isometry. Indeed,
\[
\|\iota U\|_{\mathbb L^2(\mathbf N;\mathbb F^{\mathbf N})}^2
=
\mathbb E\!\left[
\int_0^T\int_{\mathbb R_+}
|U_t|^2\mathbf 1_{\{\theta\le\lambda_t\}}\,d\theta\,dt
\right]
=
\|U\|_{L_C^2}^2.
\]
\begin{remark}
The notation $\mathbb L^2(\mathbf N;\mathbb G)$ refers to predictable
marked integrands, whereas $L^2(\Omega,\mathcal F_T^{\mathbf N},\mathbb P)$
denotes square-integrable terminal random variables.
\end{remark}

\subsection{Malliavin calculus for point processes}
\label{sec:MalliavinPointProcesses}
We introduce a Malliavin derivative adapted
to functionals of the point process $C$. The construction is inherited from
the Poisson embedding.

\begin{definition}[Shift operator]
\label{def:shift-operator}
For every $(t,\theta)\in\mathbb X$, we define the measurable map
\[
\begin{aligned}
\varepsilon_{t,\theta}^{+}:\Omega
&\longrightarrow\Omega,\\
\omega
&\longmapsto
\varepsilon_{t,\theta}^{+}(\omega),
\end{aligned}
\]
by requiring that, for every $A\in\mathcal B(\mathbb X)$,
\[
\bigl(\varepsilon_{t,\theta}^{+}(\omega)\bigr)(A)
:=
\omega\bigl(A\setminus\{(t,\theta)\}\bigr)
+
\mathbf 1_A(t,\theta).
\]
Thus, $\varepsilon_{t,\theta}^{+}(\omega)$ is the configuration obtained from
$\omega$ by adding a point at $(t,\theta)$.
\end{definition}
For every measurable functional $F:\Omega\to\mathbb R$, we denote by
$D_{t,\theta}$ the add-one derivative on the Poisson configuration space,
defined by
\[
D_{t,\theta}F
:=
F\circ\varepsilon_{t,\theta}^{+}-F.
\]

\begin{definition}[Malliavin--Sobolev space]
\label{def:HawkesSobolevSpace}
We define
\[
\mathbb D_{\mathbf N}^{1,2}
:=\left\{F\in L^2(\Omega,\mathcal F_T^{\mathbf N},\P):
\E\left[\int_0^T\int_{\mathbb R^+}|D_{t,\theta}F|^2\,d\theta dt\right]<+\infty\right\},
\]
endowed with the norm
\[\|F\|_{\mathbb D_{\mathbf N}^{1,2}}^2
:=
\mathbb E[|F|^2]
+
\mathbb E\!\left[
\int_0^T\int_{\mathbb R_+}|D_{t,\theta}F|^2\,d\theta\,dt
\right].\]
\end{definition}

\section{Martingale representation: from $\mathbb F^{\mathbf N}$ to $\mathbb F^C$ martingales}\label{sec:FN}

Now that the mathematical framework is well established, we will present results on the connections between the martingale representations in $\mathbb F^{\mathbf N}$ and $\mathbb F^C$, respectively. 

\subsection{Measurability characterization and martingale representation in $\mathbb F^C$}
We rely on the following key lemmas  which are consequences of the thinning representation to prove the  martingale representation theorem. It shows that although the auxiliary mark $\theta$ determines whether a Poisson point is
retained, its precise value is irrelevant once the point has been accepted.
\begin{lemma}[Reduction of the Poisson derivative on a smaller filtration]
\label{lem:hawkes-derivative}
Let $F$ be an $\mathcal F_T^C$-measurable real-valued random variable.
Then
\[
D_{t,\theta}F
=
\mathbf 1_{\{\theta\leq\lambda_t\}}D_{t,0}F,
\qquad
d\P\otimes dt\otimes d\theta\text{-a.e.}
\]
Equivalently,
\[
D_{t,\theta}F
=
D_{t,\bar\theta}F
\quad\text{on }
\{\theta\leq\lambda_t,\ \bar\theta\leq\lambda_t\},
\]
whereas
\[
D_{t,\theta}F=0
\quad\text{on }\{\theta>\lambda_t\}.
\]
\end{lemma}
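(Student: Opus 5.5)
The argument works pathwise, through the thinning map. Since $F$ is $\mathcal F_T^C$-measurable, a Doob--Dynkin factorization gives $F=\Phi(C_{\cdot\wedge T})$ $\P$-a.s. for some measurable functional $\Phi$ on the path space $\mathsf D$. Here $C(\omega)$ is the pathwise unique solution of the thinning equation in Remark~\ref{rem:path}. The lemma then reduces to understanding how the solution path changes when one atom is added at $(t,\theta)$. The first technical point is the $\P$-completion in $\mathbb F^C$. Because $F$ is only a.s. equal to $\Phi(C)$, I must check that $F\circ\varepsilon^+_{t,\theta}=\Phi(C\circ\varepsilon^+_{t,\theta})$ holds for $dt\,d\theta\,d\P$-a.e. $(t,\theta,\omega)$. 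This follows from the Mecke formula: if $N_0$ is $\P$-null, then
\[
\E\Big[\int_0^T\!\!\int_{\mathbb R_+}\mathbf 1_{N_0}(\varepsilon^+_{t,\theta}\omega)\,d\theta\,dt\Big]=\E\Big[\int \mathbf 1_{N_0}(\omega)\,\mathbf N(dt,d\theta)\Big]=0,
\]
so functionals agreeing a.s. also agree after the shift, almost everywhere in $(t,\theta,\omega)$.

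Next I would identify $C\circ\varepsilon^+_{t,\theta}$ pathwise. Fix $\omega$ with no atom at time $t$, which is a.s. true for a.e. $t$. On $[0,t)$ the configurations $\omega$ and $\varepsilon^+_{t,\theta}\omega$ coincide, and the thinning equation is non-anticipative (the indicator at time $s$ depends on $C|_{[0,s)}$). By pathwise uniqueness, $C\circ\varepsilon^+_{t,\theta}=C$ on $[0,t)$. Consequently $\lambda_t\circ\varepsilon^+_{t,\theta}=\Lambda(t,C|_{[0,t)})=\lambda_t(\omega)$, because $\lambda$ is $\mathbb F^C$-predictable. At time $t$ the added atom is accepted if and only if $\theta\le\lambda_t(\omega)$, and this splits the proof into two cases.

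\emph{Case $\theta>\lambda_t$.} The added point is rejected, so the path $\tilde C:=C(\omega)$ solves the thinning equation driven by $\varepsilon^+_{t,\theta}\omega$. At $s=t$ the indicator of the new atom vanishes, and at every other atom the indicator is unchanged. By uniqueness, $C\circ\varepsilon^+_{t,\theta}=C$, hence $D_{t,\theta}F=0$.

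\emph{Case $\theta\le\lambda_t$.} Let $C^{(t)}$ denote the unique solution of the modified equation that agrees with $C$ on $[0,t)$, has a forced jump at $t$, and is then driven by $\omega|_{(t,T]\times\mathbb R_+}$. This is exactly the equation $C\circ\varepsilon^+_{t,\theta}$ satisfies, and neither the equation nor its solution involves $\theta$. So $C\circ\varepsilon^+_{t,\theta}=C\circ\varepsilon^+_{t,\bar\theta}$ whenever $\theta,\bar\theta\le\lambda_t$. In particular we may take $\bar\theta=0\le\mu\le\lambda_t$, which gives $D_{t,\theta}F=D_{t,0}F$.

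Combining the two cases gives $D_{t,\theta}F=\mathbf 1_{\{\theta\le\lambda_t\}}D_{t,0}F$ almost everywhere, together with the two equivalent formulations.

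I expect the main obstacle to be rigour in the uniqueness and measurability step. Pathwise uniqueness is assumed in Remark~\ref{rem:path} only for a.e. configuration $\omega$, and I must transfer it to the shifted configurations $\varepsilon^+_{t,\theta}\omega$. I would handle this again with the Mecke formula: the set of $\omega$ where existence or uniqueness fails is null, so its preimage under the shift is $dt\,d\theta\,d\P$-null. I would also need $\lambda_t\ge\mu>0$ for a.e. $t$, so that the choice $\bar\theta=0$ lies in the acceptance region. Finally, uniqueness for the restarted equation after $t$ follows from uniqueness of the original equation applied to $\varepsilon^+_{t,\theta}\omega$, since the restarted equation is just that equation read forward from time $t$.
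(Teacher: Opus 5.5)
Your route matches the paper's: a pathwise thinning argument, split into accepted and rejected insertions and closed by pathwise uniqueness. The paper cites \cite{HHKR} for the accepted case and otherwise argues informally; its proof of Lemma~\ref{lem:decomportho} is essentially your argument. The rejected case, and the mark invariance $D_{t,\theta}F=D_{t,\bar\theta}F$ for a.e.\ $\theta,\bar\theta\le\lambda_t$, go through as you wrote them.

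The gap is your final step, ``we may take $\bar\theta=0$''. Every null-set transfer you make goes through Mecke's formula, which controls only $dt\,d\theta\,d\mathbb P$-a.e.\ $(t,\theta,\omega)$. The slice $\{\theta=0\}$ is $d\theta$-null, so nothing you proved gives $F\circ\varepsilon^+_{t,0}=\Phi(C\circ\varepsilon^+_{t,0})$, nor existence and uniqueness at $\varepsilon^+_{t,0}\omega$. This is not a formality: for an arbitrary version of $F$ the identity is false. The set $N_0:=\{\omega:\omega(\mathbb R_+\times\{0\})\ge 1\}$ is $\mathbb P$-null, hence lies in the completed $\mathcal F_T^C$, so $\widetilde F:=F+\mathbf 1_{N_0}$ is another version of $F$. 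For $\omega\notin N_0$ one has $\varepsilon^+_{t,0}\omega\in N_0$, and $\varepsilon^+_{t,\theta}\omega\notin N_0$ for $\theta>0$. Hence
\[
D_{t,\theta}\widetilde F=D_{t,\theta}F\quad(\theta>0),
\qquad
D_{t,0}\widetilde F=D_{t,0}F+1 .
\]
So the first identity of the lemma cannot hold for both $F$ and $\widetilde F$, since $\{\theta\le\lambda_t\}$ has positive measure.

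You must therefore fix the version, as the paper does explicitly in the proof of Lemma~\ref{lem:decomportho}. Set $F:=\Phi(C)$ everywhere, with $C(\omega')$ the non-explosive solution of the thinning equation at every configuration $\omega'$ where one exists. Then argue at $\varepsilon^+_{t,0}\omega$ directly rather than through Mecke.

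By Fubini, for a.e.\ $(t,\omega)$ you can pick one $\theta\in(0,\lambda_t(\omega)]$ outside your Mecke-exceptional set. The path $C(\varepsilon^+_{t,\theta}\omega)$ then also solves the equation driven by $\varepsilon^+_{t,0}\omega$: the history before $t$ is the same, there is exactly one accepted atom at $t$ (as $0\le\mu\le\lambda_t$), and the atoms after $t$ are the same.

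Uniqueness at $\varepsilon^+_{t,0}\omega$ needs no probabilistic input. Suppose two non-explosive solutions driven by the same configuration differed. At the first time $\tau$ where they differ, they share the history on $[0,\tau)$. They therefore share the value $\Lambda(\tau,\cdot)$ and the finitely many atoms in $\{\tau\}\times[0,\Lambda(\tau,\cdot)]$, hence have the same jump at $\tau$, a contradiction. Run on $[0,t)$, the same argument is also the clean justification of your claim $C\circ\varepsilon^+_{t,\theta}=C$ on $[0,t)$. Global uniqueness alone does not give that, because the two paths solve different equations on $[0,T]$.

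Alternatively, define $D_{t,0}F$ as the common a.e.\ value of $\theta\mapsto D_{t,\theta}F$ on $(0,\lambda_t]$. That is exactly what your argument establishes, and it is consistent with how the paper later uses the object, e.g.\ $U_s=\lambda_s^{-1}\int_0^{\lambda_s}Z_{s,\theta}\,d\theta$.
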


\begin{proof}
For the accepted region $\{\theta\leq \lambda_t\}$, the result is a consequence of
\cite[Proposition~2.15 and Remark~2.18]{HHKR}.\footnote{Note that \cite[Proposition~2.15 and Remark~2.18]{HHKR} is related to the particular case of Hawkes processes. The proof can however be directly extended since it does not use the general specification of $\lambda$ and relies only on an approximation on the space of simple functional of $C,\lambda$. }
Since $\lambda_t\geq\mu>0$, the mark $0$ always belongs to the
accepted region and if $\theta\leq\lambda_t$, adding a point at $(t,\theta)$
produces the same perturbation of the $C$ process as adding the point
$(t,0)$. Hence,
\[
\mathbf 1_{\{\theta\leq\lambda_t\}}D_{t,\theta}F
=
\mathbf 1_{\{\theta\leq\lambda_t\}}D_{t,0}F.
\]
If $\theta>\lambda_t$, the inserted Poisson point is rejected by the thinning
procedure. Consequently, it does not create a jump of $H$ at time $t$.
Since the future intensity is determined by the past trajectory of $C$, the
insertion of this rejected point does not modify the subsequent evolution
of $C$. As $F$ is $\mathcal F_T^C$-measurable, it follows that
\[
F\circ\varepsilon_{t,\theta}^{+}=F
\qquad\text{on }\{\theta>\lambda_t\}.
\]
Therefore,
\[
D_{t,\theta}F=0
\qquad\text{on }\{\theta>\lambda_t\},
\]
which completes the proof.
\end{proof}

For later use, we recall the standard integration-by-parts formula on the
Poisson configuration space. For $F\in\mathbb D_{\mathbf N}^{1,2}$ and
$u\in\mathbb L^2(\mathbf N;\mathbb F^{\mathbf N})$,
the Poisson integration-by-parts formula (see  \cite{Privault_2009}) gives
\begin{equation}
\E\left[
F
\int_{(0,T]\times\mathbb R_+}
u_{t,\theta}\,
\widetilde{\mathbf N}(dt,d\theta)
\right]
=
\E\left[
\int_0^T\int_{\mathbb R_+}
D_{t,\theta}F\,
u_{t,\theta}\,d\theta\,dt
\right],
\label{eq:Poisson-IBP}
\end{equation}
A non-compensated  variant of this integration-by-parts formula is Mecke's formula,  that takes the following form:
\begin{equation}\label{eq:Mecke}
 \E\left[F \int_{(0,T]\times \real_+}  u_{t,\theta} {\mathbf N} (dt,d\theta)\right] = \E\left[\int_0^T \int_{\real_+}  u_{t,\theta}  \, (F \circ \eps_{(t,\theta)}^+) d\theta \, dt \right].
\end{equation}

Combining \eqref{eq:Poisson-IBP} (resp. \eqref{eq:Mecke}) with
Lemma~\ref{lem:hawkes-derivative} immediately yields an integration-by-parts
formula with respect to the compensated point process  $C$  (resp. non compensated  point process $\tilde C$).

\begin{corollary}[Integration by parts]
\label{cor:preliminary-Hawkes-IBP}
Let $F\in L^2(\Omega,\mathcal F_T^C,\P)$
be such that
\[
\E\left[
\int_0^T
|D_{t,0}F|^2\lambda_t\,dt
\right]
<+\infty.
\]
Then, for every deterministic $h\in L^2([0,T],dt)$,
\[
\E\left[
F\int_0^T h_t\,dM_t^C
\right]
=
\E\left[
\int_0^T
D_{t,0}F\,h_t\lambda_t\,dt
\right].
\]
\end{corollary}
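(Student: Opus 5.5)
We will prove the formula by writing the stochastic integral against $M^C$ as a Poisson stochastic integral and then combining the Poisson integration-by-parts formula \eqref{eq:Poisson-IBP} with Lemma~\ref{lem:hawkes-derivative}.

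\textbf{Step 1: rewrite the integral against $M^C$ as a Poisson integral.} Fix a deterministic $h\in L^2([0,T],dt)$ and set $u_{t,\theta}:=h_t\mathbf 1_{\{\theta\le\lambda_t\}}$. Since $\lambda$ is $\mathbb F^C$-predictable and $\mathbb F^C\subseteq\mathbb F^{\mathbf N}$, $u$ is $\widehat{\mathcal P}(\mathbb F^{\mathbf N})$-measurable. By the isometry $\iota$ it satisfies
\[
\|u\|^2_{\mathbb L^2(\mathbf N;\mathbb F^{\mathbf N})}=\mathbb E\Big[\int_0^T h_t^2\lambda_t\,dt\Big].
\]
This quantity is finite only if $h^2\lambda$ is integrable. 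Assumption~\ref{ass:lambda}(i) controls $\mathbb E[\lambda_t]$ uniformly in $t$, so it suffices to treat first bounded $h$. By the representation of $M^C$ as $\int\mathbf 1_{\{\theta\le\lambda_s\}}\widetilde{\mathbf N}(ds,d\theta)$ given above,
\[
\int_0^T h_t\,dM^C_t=\int_{(0,T]\times\mathbb R_+}u_{t,\theta}\,\widetilde{\mathbf N}(dt,d\theta).
\]

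\textbf{Step 2: apply integration by parts and Lemma~\ref{lem:hawkes-derivative}.} Formula \eqref{eq:Poisson-IBP} requires $F\in\mathbb D^{1,2}_{\mathbf N}$. By Lemma~\ref{lem:hawkes-derivative}, $D_{t,\theta}F=\mathbf 1_{\{\theta\le\lambda_t\}}D_{t,0}F$ almost everywhere, so
\[
\mathbb E\int_0^T\!\!\int_{\mathbb R_+}|D_{t,\theta}F|^2\,d\theta\,dt=\mathbb E\int_0^T|D_{t,0}F|^2\lambda_t\,dt<\infty.
\]
Hence the hypothesis of the corollary is exactly $F\in\mathbb D^{1,2}_{\mathbf N}$. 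Applying \eqref{eq:Poisson-IBP} and inserting the Lemma again gives
\[
\mathbb E\Big[F\int_0^T h_t\,dM^C_t\Big]=\mathbb E\int_0^T\!\!\int_{\mathbb R_+}\mathbf 1_{\{\theta\le\lambda_t\}}D_{t,0}F\,h_t\,d\theta\,dt=\mathbb E\int_0^T D_{t,0}F\,h_t\lambda_t\,dt.
\]
Fubini in $\theta$ is justified by Cauchy--Schwarz in $L^2(\lambda_t\,dt\,d\mathbb P)$.

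\textbf{Step 3: extend from bounded $h$ to general $h\in L^2([0,T],dt)$.} This is the main obstacle. For unbounded $h$, $\mathbb E\int h^2\lambda\,dt$ need not be finite, since only $\sup_t\mathbb E[\lambda_t]<\infty$ is assumed. I would first check whether $\mathbb E[\lambda_t]\le K$ already gives $\mathbb E\int h_t^2\lambda_t\,dt\le K\|h\|^2_{L^2}$.

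This does hold by Tonelli: $\mathbb E\int h_t^2\lambda_t\,dt=\int h_t^2\,\mathbb E[\lambda_t]\,dt\le K\|h\|_{L^2}^2$. So $u\in\mathbb L^2(\mathbf N;\mathbb F^{\mathbf N})$ directly, and no truncation is needed.

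Both sides are then well defined. The left side is finite because $F\in L^2$ and the stochastic integral is in $L^2$ by the Itô isometry. The right side is finite by Cauchy--Schwarz against $\lambda\,dt\,d\mathbb P$. The identity therefore holds for all $h\in L^2([0,T],dt)$ directly.
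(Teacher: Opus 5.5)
Your proof is correct and follows the paper's route: you rewrite $\int_0^T h_t\,dM^C_t$ as the Poisson integral of $h_t\mathbf 1_{\{\theta\le\lambda_t\}}$, use Lemma~\ref{lem:hawkes-derivative} to get $F\in\mathbb D^{1,2}_{\mathbf N}$, apply \eqref{eq:Poisson-IBP}, and integrate out $\theta$ on the accepted region. Your Tonelli bound $\mathbb E\int_0^T h_t^2\lambda_t\,dt\le \sup_t\mathbb E[\lambda_t]\,\|h\|^2_{L^2}$ gives the square-integrability of the integrand, which the paper leaves implicit, and it makes your bounded-$h$ detour in Step~1 unnecessary.
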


\begin{proof}
By the thinning representation,
\[
\int_0^T h_t\,dM_t^C
=
\int_{(0,T]\times\mathbb R_+}
h_t\mathbf 1_{\{\theta\leq\lambda_t\}}
\,\widetilde{\mathbf N}(dt,d\theta).
\]
By Lemma~\ref{lem:hawkes-derivative} and the assumption on $F$,

$$
\E\left[
\int_0^T\int_{\R_+}
|D_{t,\theta}F|^2\,d\theta\,dt
\right]
=
\E\left[
\int_0^T
|D_{t,0}F|^2\lambda_t\,dt
\right]
<+\infty.
$$
Hence $F\in\mathbb D_{\mathbf N}^{1,2}$, and the Poisson integration-by-parts formula~\eqref{eq:Poisson-IBP} applies:
\begin{align*}
\E\left[
F\int_0^T h_t\,dM_t^C
\right]
&=
\E\left[
\int_0^T\int_{\mathbb R_+}
D_{t,\theta}F\,
h_t\mathbf 1_{\{\theta\leq\lambda_t\}}
\,d\theta\,dt
\right].
\end{align*}
By Lemma~\ref{lem:hawkes-derivative},
\[
D_{t,\theta}F
=
D_{t,0}F
\qquad\text{on }\{\theta\leq\lambda_t\}.
\]
Therefore,
$$
\E\left[
F\int_0^T h_t\,dM_t^C
\right]
=
\E\left[
\int_0^T
\int_0^{\lambda_t}
D_{t,0}F\,h_t\,d\theta\,dt
\right]
=
\E\left[
\int_0^T
D_{t,0}F\,h_t\lambda_t\,dt
\right].
$$
\end{proof}
 We define now   \[
\mathbb D_{C}^{1,2}
:=\left\{F\in L^2(\Omega,\mathcal F_T^{C},\P):
\E\left[\int_0^T|D_{t,0}F|^2\lambda_t\,dt\right]<+\infty\right\},
\]
equipped with the norm
\[
\|F\|_{\mathbb D_C^{1,2}}^2
:=
\mathbb E[|F|^2]
+
\mathbb E\!\left[
\int_0^T |D_{t,0}F|^2\lambda_t\,dt
\right].
\]

\begin{corollary} For any random variable $F\in \mathbb D_C^{1,2}\subset \mathbb D_{\mathbf N}^{1,2}$, we have
 $\|F\|_{\mathbb D_C^{1,2}}
=
\|F\|_{\mathbb D_{\mathbf N}^{1,2}}.$
 \end{corollary}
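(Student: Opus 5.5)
The corollary makes two claims: that $\mathbb D_C^{1,2}$ sits inside $\mathbb D_{\mathbf N}^{1,2}$, and that the two norms agree there. Both come down to computing the $d\theta$-integral of $|D_{t,\theta}F|^2$ with Lemma~\ref{lem:hawkes-derivative}, as in the proof of Corollary~\ref{cor:preliminary-Hawkes-IBP}.

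Fix $F\in\mathbb D_C^{1,2}$. By definition, $F\in L^2(\Omega,\mathcal F_T^C,\P)$. Since $\mathcal F_T^C\subseteq\mathcal F_T^{\mathbf N}$, we also have $F\in L^2(\Omega,\mathcal F_T^{\mathbf N},\P)$. So the $L^2$ parts of the two norms are the same quantity $\E[|F|^2]$, and only the derivative terms need to be compared.

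Because $F$ is $\mathcal F_T^C$-measurable, Lemma~\ref{lem:hawkes-derivative} gives
\[
D_{t,\theta}F=\mathbf 1_{\{\theta\le\lambda_t\}}D_{t,0}F
\qquad d\P\otimes dt\otimes d\theta\text{-a.e.}
\]
Squaring this identity and integrating in $\theta$ over $\mathbb R_+$ for fixed $(t,\omega)$ gives
\[
\int_{\mathbb R_+}|D_{t,\theta}F|^2\,d\theta=|D_{t,0}F|^2\lambda_t .
\]
All integrands here are nonnegative and jointly measurable, so Tonelli's theorem lets us integrate this in $t$ and take expectations:
\[
\E\Big[\int_0^T\int_{\mathbb R_+}|D_{t,\theta}F|^2\,d\theta\,dt\Big]=\E\Big[\int_0^T|D_{t,0}F|^2\lambda_t\,dt\Big]<\infty .
\]
Finiteness of this quantity gives the inclusion $F\in\mathbb D_{\mathbf N}^{1,2}$. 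Adding $\E[|F|^2]$ to both sides then gives $\|F\|_{\mathbb D_C^{1,2}}^2=\|F\|_{\mathbb D_{\mathbf N}^{1,2}}^2$, and taking square roots gives the claimed equality of norms.

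The only delicate point is measure-theoretic, and it is mild. The identity from Lemma~\ref{lem:hawkes-derivative} holds only almost everywhere, so we must check that $(t,\theta,\omega)\mapsto D_{t,\theta}F$ is jointly measurable and that an a.e. identity survives integration in $\theta$. Joint measurability holds because $(t,\theta,\omega)\mapsto\varepsilon^+_{t,\theta}(\omega)$ is measurable. The a.e. identity survives because of Fubini: for $dt\otimes d\P$-almost every $(t,\omega)$, the identity holds for $d\theta$-almost every $\theta$.
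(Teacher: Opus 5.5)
Your proposal is correct and follows the paper's argument: you apply Lemma~\ref{lem:hawkes-derivative}, integrate the squared identity in $\theta$ to get $\lambda_t|D_{t,0}F|^2$, and obtain both the inclusion and the equality of norms. Your remarks on $\mathcal F_T^C\subseteq\mathcal F_T^{\mathbf N}$ and on measurability and Fubini only make explicit what the paper's one-line proof leaves implicit.
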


\begin{proof}
    Let $F\in \mathbb D^{1,2}_C$. By using Lemma \ref{lem:hawkes-derivative} we note that

   \[ \E\left[\int_0^T\int_{\mathbb R^+}|D_{t,\theta}F|^2\,d\theta dt\right]= \E\left[\int_0^T|D_{t,0}F|^2 \lambda_t dt\right]<\infty,\]
   since $F\in L^2(\Omega,\mathcal F_T^{C},\P)$.
\end{proof}
We now want to develop the converse part of Lemma \ref{lem:hawkes-derivative}. The following lemma is the key result to derive the representation martingale theorem in the $\mathbb F^C-$filtration. Note that this result has not been established in preliminaries studies as for example \cite{HHKR,hillairet2024poisson}, while it is fundamental for our study. 
\begin{lemma}\label{lemma:derivetoH}
Let $F \in L^2(\Omega, \mathcal{F}_T^{\mathbf N}, \mathbb P)$. If
\begin{equation}\label{formuleDFmark}
D_{t,\theta}F = 1_{\{\theta \leq \lambda_t\}} D_{t,0}F,\; d\mathbb P \otimes dt \otimes d\theta-a.e.,
\end{equation}
then $F$ is $\mathcal{F}_T^C$-measurable.
\end{lemma}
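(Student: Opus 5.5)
We propose to show that $F$ coincides almost surely with $\E[F\mid\mathcal F_T^C]$. Set $G:=\E[F\mid\mathcal F_T^C]$ and $R:=F-G$. It suffices to prove that $R=0$ a.s. Our tool is the Poisson Clark--Ocone formula together with the predictable representation property of $\mathbf N$ in $\mathbb F^{\mathbf N}$. We first reduce to the case $F\in\mathbb D^{1,2}_{\mathbf N}$. The hypothesis \eqref{formuleDFmark} only makes sense for the functional $F$ itself, so we need it to be compatible with the Clark--Ocone integrand. Either we assume the derivative is square integrable, or, more safely, we work with $Z_{t,\theta}$, the $\widehat{\mathcal P}(\mathbb F^{\mathbf N})$-integrand of $F$ in the representation $F=\E[F]+\int Z\,d\widetilde{\mathbf N}$. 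For $F$ in the domain, $Z_{t,\theta}=\E[D_{t,\theta}F\mid\mathcal F^{\mathbf N}_{t-}]$. Hypothesis \eqref{formuleDFmark} and the $\mathbb F^{\mathbf N}$-predictability of $\lambda$ then give
\[
Z_{t,\theta}=\mathbf 1_{\{\theta\le\lambda_t\}}\,\E[D_{t,0}F\mid\mathcal F^{\mathbf N}_{t-}]=:\mathbf 1_{\{\theta\le\lambda_t\}}V_t ,
\]
so that $F=\E[F]+\int_0^T V_t\,dM^C_t$, with $V$ only $\mathbb F^{\mathbf N}$-predictable.

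The key step is to show that $R$ is orthogonal to itself. We take an arbitrary bounded $\mathcal F_T^{\mathbf N}$-measurable $K$ with $\E[K\mid\mathcal F^C_T]=0$ and aim to show $\E[FK]=0$; this forces $F\in L^2(\mathcal F_T^C)$. Write $K=\E[K]+\int W\,d\widetilde{\mathbf N}$. The isometry gives
\[
\E[FK]=\E\!\left[\int_0^T\!\!\int_0^{\lambda_t}V_tW_{t,\theta}\,d\theta\,dt\right]=\E\!\left[\int_0^T V_t\,\bar W_t\,dt\right],\qquad \bar W_t:=\int_0^{\lambda_t}W_{t,\theta}\,d\theta .
\]
Thus only the accepted-region part of the integrand of $K$ matters.

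We then need $\bar W$ to vanish, in the sense that $\E[\int_0^T V_t\bar W_t\,dt]=0$. The natural route is to exploit the mark-invariance more strongly. We use \eqref{formuleDFmark} on the configuration level: through the Mecke formula \eqref{eq:Mecke}, $F\circ\varepsilon^+_{t,\theta}$ does not change when a point with $\theta>\lambda_t$ is added. Iterating this over all rejected points (an induction on the number of atoms in bounded windows) shows that $F$ is a function of $\omega$ restricted to the accepted region. Likewise, on the accepted region $F$ is invariant under moving marks within $[0,\lambda_t]$. We would make this rigorous by conditioning: given $C$, the rejected points form an independent Poisson measure on $\{\theta>\lambda_t\}$, and the accepted marks are i.i.d.\ uniform on $[0,\lambda_{t_i}]$. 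We would prove this via the Mecke equation, or equivalently via a change of variables in the Janossy densities on $[0,T]\times[0,m]$. Invariance of $F$ under adding rejected points and under resampling accepted marks then shows that $F$ equals its conditional expectation given $C$.

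The main obstacle is this passage from an a.e.\ statement about one added point to invariance under the whole rejected configuration and all accepted marks. Hypothesis \eqref{formuleDFmark} holds only $dt\,d\theta\,d\P$-a.e., and adding a point may change $\lambda$ at later times, which shifts the acceptance region. We would handle this by first truncating to $\mathbf N$ restricted to $[0,T]\times[0,m]$, where there are finitely many atoms and densities are explicit, and proving the statement for $F_m:=\E[F\mid\mathcal F^{\mathbf N,m}_T]$. We would then pass to the limit $m\to\infty$ using $\E[\lambda_t]<\infty$ (Assumption~\ref{ass:lambda}), so that points with $\theta>m$ are eventually rejected, together with $L^2$ martingale convergence.
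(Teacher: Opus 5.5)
There is a genuine gap. Your first route does not advance. Asking that $\mathbb E[FK]=0$ for every bounded $K$ with $\mathbb E[K\mid\mathcal F_T^C]=0$ is just a restatement of the claim. After the isometry you are left with $\mathbb E[\int_0^T V_t\bar W_t\,dt]=0$, where $V$ is only $\mathbb F^{\mathbf N}$-predictable and nothing forces $\bar W$ to vanish. This would follow if $V$ had an $\mathbb F^C$-predictable version, but that is exactly what the lemma is later used to provide (Theorem~\ref{thm:hawkes-mrp}), so this route presupposes the conclusion. (The domain issue you worry about is harmless. The truncation $\varphi_n(F)$ preserves the hypothesis, and for bounded $F$ the hypothesis gives $|D_{t,\theta}F|\le 2\|F\|_\infty\mathbf 1_{\{\theta\le\lambda_t\}}$, which is square integrable since $\mathbb E\int_0^T\lambda_t\,dt<\infty$.)

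The second route therefore carries the whole proof, and its decisive step is only announced: passing from the a.e.\ one-point statement to invariance of $F$ under the whole rejected configuration and under resampling of all accepted marks. There are a.s.\ infinitely many rejected atoms in $[0,T]\times\mathbb R_+$ and $F$ is merely measurable, so no finite induction removes them. Your truncation does not repair this. For $\theta\le m$, integrating out the atoms with marks above $m$ gives $D_{t,\theta}F_m=\mathbb E'[\mathbf 1_{\{\theta\le\lambda_t\}}D_{t,0}F]$, and $\lambda_t$ (indeed $C$ itself) depends on those atoms as soon as $\lambda$ exceeds $m$ before $t$. So $F_m=\mathbb E[F\mid\mathcal F_T^{\mathbf N,m}]$ does not inherit the hypothesis, and $C$ is not $\mathcal F_T^{\mathbf N,m}$-measurable, so there is no clear statement about $F_m$ to pass to the limit. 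Moreover, $\sup_t\mathbb E[\lambda_t]<\infty$ does not give $\sup_{t\le T}\lambda_t<\infty$ a.s., so ``points above $m$ are eventually rejected'' is not available. (Under an a priori bound $\lambda\le\bar\mu\le m$ your Janossy-density argument could be carried out, but the lemma assumes no such bound.)

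The missing idea is how to integrate out the rejected points and the accepted marks when the acceptance region $\{\theta\le\lambda_t\}$ is random, using only the one-point hypothesis. The paper tests $\overline F=F-\mathbb E[F\mid\mathcal F_T^C]$ against the total family $e^{-I(f)}$ and factorizes $e^{-I(f)}=ABY_T$. Here $A$ is the product of $e^{-f}$ over accepted atoms, $B$ is an $\mathcal F_T^C$-measurable compensator, and $Y_T$ is a compensated exponential over the rejected region. Thus $Y_T-1$ is a $\widetilde{\mathbf N}$-integral of a predictable integrand supported on $\{\theta>\lambda_t\}$. Since $D_{t,\theta}(\overline FAB)=0$ there, one application of \eqref{eq:Poisson-IBP} removes $Y_T$. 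One application of Mecke's formula \eqref{eq:Mecke}, with a telescoping identity, then replaces $A$ by $\widetilde A=\prod_i g_{T_i}$, where $g_t=\lambda_t^{-1}\int_0^{\lambda_t}e^{-f(t,\theta)}\,d\theta$. This works because $\overline F(\varepsilon^+_{t,\theta}\omega)$ does not depend on the accepted mark $\theta$. Finally $\mathbb E[\overline FB\widetilde A]=0$ because $B\widetilde A$ is $\mathcal F_T^C$-measurable. These two steps are rigorous versions of your heuristics ``rejected points are Poisson given $C$'' and ``accepted marks are uniform given $C$''. Note also that the truncation is done on the values of $F$, not on the marks.
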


\begin{proof}
 Set $\widetilde F:=\mathbb E[F\mid\mathcal F_T^C]$, and $
\overline F:=F-\widetilde F.$
Then, $\mathbb E[\overline F\mid\mathcal F_T^C]=0.$
By Lemma~\ref{lem:hawkes-derivative} applied to $\widetilde F$ and the assumption on $F$, we deduce by linearity of the Malliavin derivative
\begin{equation}
D_{t,\theta}\overline F
=
\mathbf 1_{\{\theta\leq\lambda_t\}}D_{t,0}\overline F,
\qquad
d\mathbb P\otimes dt\otimes d\theta\text{-a.e.}
\label{eq:lemma34-residual}
\end{equation}
For every bounded nonnegative measurable function $f$ vanishing
outside $(0,T]\times[0,K]$ for some $K<\infty$, define
\[
I(f):=\int_{(0,T]\times\mathbb R_+}
f(t,\theta)\,\mathbf N(dt,d\theta).
\]
The linear span of the random variables $e^{-I(f)}$, as $f$
ranges over this class, is dense in
$L^2(\Omega,\mathcal F_T^{\mathbf N},\mathbb P)$,
see \cite[Lemma~18.4]{last2017penrose}.
To prove that $\overline F=0$ it  therefore suffices to prove
\begin{equation}
\mathbb E[\overline F e^{-I(f)}]=0
\label{eq:lemma34-target}
\end{equation}
for any such $f$.
To achieve \eqref{eq:lemma34-target} we divide the proof in several steps.

\textit{Step 1: Assume that $F$ is bounded.} So that $\overline F$ is bounded. Let $(T_i,\Theta_i)_{1\leq i\leq H_T}$ denote the retained atoms,
ordered by time. We start by decomposing $e^{I(f)}$ with jumps accepted by the thinning and those which are rejected, see for example,
\cite{hillairet2024chaotic}:
\[
I(f)
=
\sum_{i=1}^{C_T}f(T_i,\Theta_i)
+
\int_{(0,T]\times\mathbb R_+}
\mathbf 1_{\{\theta>\lambda_t\}}f(t,\theta)\,
\mathbf N(dt,d\theta).
\]
Consequently,

$$e^{-I(f)}=  \exp\left(
-\int_{(0,T] \times\mathbb R_+}
\mathbf 1_{\{\theta>\lambda_s\}}f(s,\theta)\,\mathbf N(ds,d\theta)\right) \prod_{i=1}^{C_T} a(T_i,\theta_i),\; a(t,\theta):=e^{-f(t,\theta)}.$$
We compensate the exponential on the rejected domain to get an exponential martingale and define
\[
Y_t:=
\exp\left(
-\int_{(0,t]\times\mathbb R_+}
\mathbf 1_{\{\theta>\lambda_s\}}f(s,\theta)\,\mathbf N(ds,d\theta)
+
\int_0^t\int_{\lambda_s}^{\infty}
\bigl(1-a(s,\theta)\bigr)\,d\theta\,ds
\right),
\] and we set $B$ this compensator at time $T$:
\[
B:=
\exp\left(
-\int_0^T\int_{\lambda_t}^{\infty}
\bigl(1-a(t,\theta)\bigr)\,d\theta\,dt
\right).
\] In particular, $B$ is $\mathcal F_T^C$-measurable. From It\^o-jump formula applied to $Y$ we note that
\begin{equation}
Y_T-1
=
\int_{(0,T]\times\mathbb R_+}
Y_{t-}\bigl(a(t,\theta)-1\bigr)
\mathbf 1_{\{\theta>\lambda_t\}}
\,\widetilde {\mathbf N}(dt,d\theta).
\label{eq:lemma34-Y-integral}
\end{equation}
The integrand is $\mathbb F^{\mathbf N}-$predictable and square-integrable. Indeed, since $f\geq 0$ and $f(t,\theta)=0$ for $\theta>K$,
\[
0\leq 1-a(t,\theta)
=1-e^{-f(t,\theta)}
\leq \mathbf 1_{\{\theta\leq K\}}.
\]
The integral with respect to $\mathbf N$ in the exponent defining
$Y_t$ is nonpositive. Therefore,
\[
0<Y_t
\leq
\exp\left(
\int_0^t\int_{\lambda_s}^{\infty}
\bigl(1-a(s,\theta)\bigr)\,d\theta\,ds
\right)
\leq
\exp\left(\int_0^t(K-\lambda_s)_+\,ds\right)
\leq e^{KT},
\quad t\in[0,T].
\]
Consequently,
\[
\mathbb E\left[
\int_0^T\int_{\mathbb R_+}
\left|
Y_{t-}\bigl(a(t,\theta)-1\bigr)
\mathbf 1_{\{\theta>\lambda_t\}}
\right|^2\,d\theta\,dt
\right]
\leq KT e^{2KT}<\infty.
\]
By construction,
\begin{equation}
e^{-I(f)}=ABY_T,  \quad \text{ where } \; A:=\prod\limits_{i=1}^{C_T}a(T_i,\Theta_i)
\label{eq:lemma34-factorization},
\end{equation}
with the convention that empty products equal $1$. Note that $A$ and $B$
take values in $(0,1]$.

\medskip
\noindent
\textit{Step 1a: Elimination of the rejected-point exponential.}

Set $G:=\overline F AB$. Since $A,B$ are both $\mathcal F_T^C$-measurable, they are unchanged by inserting a rejected point.
Together with \eqref{eq:lemma34-residual}, this implies
\[
D_{t,\theta}G=0
\quad\text{on }\{\theta>\lambda_t\},
\qquad
d\mathbb P\otimes dt\otimes d\theta\text{-a.e.}
\]
Since $G$ is bounded, by Cauchy-Schwarz inequality, we have
\[
\mathbb E\left[
\int_0^T\int_{\mathbb R_+}
|D_{t,\theta}G|^2\,d\theta\,dt
\right]
\leq
4\|\overline F\|_\infty^2
\mathbb E\left[\int_0^T\lambda_t\,dt\right]
<\infty.
\]
The last integrability follows from the standing assumption on $\lambda$. Thus $G\in\mathbb D_{\mathbf N}^{1,2}$.
Applying the Poisson integration-by-parts formula to $G$ and
the integrand in \eqref{eq:lemma34-Y-integral} yields
\[
\begin{aligned}
\mathbb E[G(Y_T-1)]
=
\mathbb E\left[
\int_0^T\int_{\mathbb R_+}
D_{t,\theta}G\,
Y_{t-}\bigl(a(t,\theta)-1\bigr)
\mathbf 1_{\{\theta>\lambda_t\}}
\,d\theta\,dt
\right] =0.
\end{aligned}
\]
Hence, by \eqref{eq:lemma34-factorization},
\begin{equation}
\mathbb E[\overline F e^{-I(f)}]
=
\mathbb E[\overline F AB].
\label{eq:lemma34-remove-Y}
\end{equation}

\medskip
\noindent
\textit{Step 1b: Averaging the retained marks.}\\
Define 
\[g_t:=\frac{1}{\lambda_t}\int_0^{\lambda_t}a(t,\theta)\,d\theta,\;  \quad \mbox{and} \quad
\widetilde A:=\prod_{i=1}^{C_T}g_{T_i}.\]
The telescoping product identity gives
\begin{equation}
A-\widetilde A
=
\sum_{k=1}^{C_T}
\left(\prod_{i<k}g_{T_i}\right)
\bigl(a(T_k,\Theta_k)-g_{T_k}\bigr)
\left(\prod_{i>k}a(T_i,\Theta_i)\right).
\label{eq:lemma34-telescoping}
\end{equation}
Our goal is to show that
\begin{equation}
\mathbb E[\overline F B(A-\widetilde A)]=0.
\label{eq:lemma34-mark-averaging}
\end{equation}
For a configuration $\omega$, write
\[
P_t(\omega):=
\prod_{T_i(\omega)<t}g_{T_i(\omega)}(\omega),
\qquad
Q_t(\omega):=
\prod_{T_i(\omega)>t}
a(T_i(\omega),\Theta_i(\omega)).
\]
Then \eqref{eq:lemma34-telescoping} becomes
\[
A-\widetilde A
=
\int_{(0,T]\times\mathbb R_+}
\mathbf 1_{\{\theta\leq\lambda_t\}}
P_tQ_t\bigl(a(t,\theta)-g_t\bigr)\,{\mathbf N}(dt,d\theta).
\]
Write $\omega^{t,\theta}:=\omega+\delta_{(t,\theta)}$.
Insertion at time $t$ leaves the history strictly before $t$
unchanged. In particular,
\[
\lambda_t(\omega^{t,\theta})=\lambda_t(\omega),
\qquad
g_t(\omega^{t,\theta})=g_t(\omega),
\qquad
P_t(\omega^{t,\theta})=P_t(\omega).
\]
Mecke's formula \eqref{eq:Mecke} therefore gives 
\begin{equation}
\begin{aligned}
&\mathbb E[\overline F B(A-\widetilde A)]\\
&\quad=
\mathbb E\left[
\int_0^T\int_0^{\lambda_t(\omega)}
\overline F(\omega^{t,\theta})
B(\omega^{t,\theta})
P_t(\omega)
Q_t(\omega)
\bigl(a(t,\theta)-g_t(\omega)\bigr)
\,d\theta\,dt
\right].
\end{aligned}
\label{eq:lemma34-Mecke}
\end{equation}
This is justified by the boundedness of the factors, especially $\overline F$ for this case that we will relax in the last step of the proof and
\[
\mathbb E[C_T]
=
\mathbb E\left[\int_0^T\lambda_t\,dt\right]
<\infty.
\]
Recall that \eqref{eq:lemma34-residual} implies that
\[
\overline F(\omega^{t,\theta})
=
\overline F(\omega)+D_{t,0}\overline F(\omega)
\]
throughout the accepted region, up to the stated null sets.
Furthermore, inserting a point at time $t$ with any accepted mark
produces the same trajectory than $C$. Hence
$B(\omega^{t,\theta})$ is independent of the accepted inserted mark and for any $\theta,\theta'\leq \lambda_t(\omega)$, the thinning
construction yields
\[
C(\omega^{t,\theta})=C(\omega^{t,\theta'}),
\qquad
B(\omega^{t,\theta})=B(\omega^{t,\theta'}).
\]
 Consequently, for $d\mathbb P\otimes dt$-almost every $(\omega,t)$, the factor
\[
\overline F(\omega^{t,\theta})
B(\omega^{t,\theta})
P_t(\omega)
Q_t(\omega)
\]
is independent of $\theta$, for Lebesgue-almost every
$\theta\in(0,\lambda_t(\omega)]$. Since 

\[
\int_0^{\lambda_t(\omega)}
\bigl(a(t,\theta)-g_t(\omega)\bigr)\,d\theta=0, 
\]the inner integral in \eqref{eq:lemma34-Mecke}
vanishes. This proves \eqref{eq:lemma34-mark-averaging}. \\

\textit{Step 1c. Orthogonality and conclusion for the bounded case.} \\
Combining \eqref{eq:lemma34-remove-Y} and
\eqref{eq:lemma34-mark-averaging}, we obtain
\[
\begin{aligned}
\mathbb E[\overline F e^{-I(f)}]
=\mathbb E[\overline F BA]=\mathbb E[\overline F B\widetilde A]
=\mathbb E\left[
B\widetilde A\,
\mathbb E[\overline F\mid\mathcal F_T^C]
\right]
=0.
\end{aligned}
\]
By totality of the Poisson exponentials, $\overline F=0$
almost surely. Thus $F$ is $\mathcal F_T^C$-measurable in the
bounded case.

\medskip
\noindent
\textit{Step 2: Extension to $F\in L^2(\Omega,\mathcal F_T^{\mathbf N},\mathbb P)$.} For $n\geq1$, define
\[
\varphi_n(x):=(-n)\vee(x\wedge n),
\qquad
F_n:=\varphi_n(F).
\]
The derivative condition is preserved by truncation. Indeed,
\[
\begin{aligned}
D_{t,\theta}F_n
&=
\varphi_n(F+D_{t,\theta}F)-\varphi_n(F)\\
&=
\mathbf 1_{\{\theta\leq\lambda_t\}}
\bigl[\varphi_n(F+D_{t,0}F)-\varphi_n(F)\bigr]\\
&=
\mathbf 1_{\{\theta\leq\lambda_t\}}D_{t,0}F_n,
\end{aligned}
\]
for $d\mathbb P\otimes dt\otimes d\theta$-almost every
$(\omega,t,\theta)$.
The bounded case shows that each $F_n$ is
$\mathcal F_T^C$-measurable.
Since $F_n\to F$ in $L^2(\Omega,\mathcal F_T^{\mathbf N},\mathbb P)$, which is a closed space of variables, the conclusion follows.
\end{proof}

As a direct consequence of Lemma \ref{lem:hawkes-derivative} and Lemma \ref{lemma:derivetoH} we have the following characterization of the $\mathbb F^C-$ measurability.

\begin{corollary}\label{cor:FH}
    Let $F \in L^2(\Omega, \mathcal{F}_T^{\mathbf N}, \mathbb P)$. Then 
\[
D_{t,\theta}F = 1_{\{\theta \leq \lambda_t\}} D_{t,0}F,\; d\mathbb P \otimes dt \otimes d\theta-a.e.,
\text{ if and only if } F \text{ is $\mathcal{F}_T^C$-measurable.}\]
\end{corollary}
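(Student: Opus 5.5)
This corollary simply combines the two preceding lemmas, one for each direction of the equivalence. Throughout, fix $F\in L^2(\Omega,\mathcal F_T^{\mathbf N},\mathbb P)$. Since $\mathcal F_T^C\subseteq\mathcal F_T^{\mathbf N}$, saying that $F$ is $\mathcal F_T^C$-measurable is the same as saying that $F$ has an $\mathcal F_T^C$-measurable version in $L^2(\Omega,\mathcal F_T^{\mathbf N},\mathbb P)$. This is the only sense in which measurability is meant here, since random variables are identified $\mathbb P$-a.s.

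For the implication ``$F$ is $\mathcal F_T^C$-measurable $\Rightarrow$ derivative identity'', I apply Lemma~\ref{lem:hawkes-derivative} directly. That lemma requires only $\mathcal F_T^C$-measurability and no integrability, and it yields $D_{t,\theta}F=\mathbf 1_{\{\theta\le\lambda_t\}}D_{t,0}F$, $d\mathbb P\otimes dt\otimes d\theta$-a.e. There is one small point to check. If $F$ is only a.s. equal to an $\mathcal F_T^C$-measurable variable $F'$, I must ensure that $D_{t,\theta}F=D_{t,\theta}F'$ holds $d\mathbb P\otimes dt\otimes d\theta$-a.e.
\begin{itemize}
\item The term $F=F'$ holds a.s. by assumption.
\item For the shifted term, I use Mecke's formula \eqref{eq:Mecke} with integrand $\mathbf 1_{\{F\neq F'\}}$ restricted to $(0,T]\times[0,K]$. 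Its left-hand side vanishes, which gives $F\circ\varepsilon^+_{t,\theta}=F'\circ\varepsilon^+_{t,\theta}$ for $d\mathbb P\otimes dt\otimes d\theta$-a.e. $(\omega,t,\theta)$. Letting $K\to\infty$ covers all marks.
\end{itemize}
So the identity transfers from $F'$ to $F$.

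For the converse implication ``derivative identity $\Rightarrow$ $\mathcal F_T^C$-measurable'', the hypothesis is exactly \eqref{formuleDFmark} and $F\in L^2(\Omega,\mathcal F_T^{\mathbf N},\mathbb P)$. Lemma~\ref{lemma:derivetoH} then gives the conclusion immediately.

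There is no genuine obstacle here; all the substance lies in Lemma~\ref{lemma:derivetoH}. The only care needed is the null-set bookkeeping described above, namely that the add-one operator respects $\mathbb P$-a.s. equality up to $d\mathbb P\otimes dt\otimes d\theta$-null sets. This is a standard consequence of the Mecke formula, and I would state it in one line.
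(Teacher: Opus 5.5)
Your reduction to Lemma~\ref{lem:hawkes-derivative} for the forward direction and Lemma~\ref{lemma:derivetoH} for the converse is exactly the paper's argument; the paper states the corollary as a direct consequence of the two lemmas. The flaw is in the null-set step you add. Mecke's formula does give $F\circ\varepsilon^+_{t,\theta}=F'\circ\varepsilon^+_{t,\theta}$ for $d\mathbb P\otimes dt\otimes d\theta$-a.e. $(\omega,t,\theta)$. However, the right-hand side of the identity evaluates $F$ at the \emph{fixed} mark $\theta=0$. That is a Lebesgue-null set of marks, on which Mecke's formula gives no information, and the transfer is in fact false. Take
\[
F:=\mathbf 1_{\{\mathbf N((0,T]\times\{0\})\ge 1\}} .
\]
It is $\mathcal F_T^{\mathbf N}$-measurable and a.s. equal to $F'=0$, since $\mathbf N((0,T]\times\{0\})$ is Poisson with parameter $0$. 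So it is $\mathcal F_T^C$-measurable for the augmented filtration. Yet for a.e. $\omega$ and every $t\in(0,T]$ one has $D_{t,\theta}F=0$ for all $\theta>0$, while $D_{t,0}F=1$. Hence $D_{t,\theta}F\neq\mathbf 1_{\{\theta\le\lambda_t\}}D_{t,0}F$ on a set of $d\mathbb P\otimes dt\otimes d\theta$-measure $\mathbb E[\int_0^T\lambda_t\,dt]\ge\mu T>0$.

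So $D_{t,0}F$ depends on the chosen version, and the forward implication is only meaningful once a canonical version is fixed. There are two ways to do the bookkeeping correctly.
\begin{itemize}
\item Use the convention the paper makes explicit in the proof of Lemma~\ref{lem:decomportho}, and uses tacitly in Lemma~\ref{lem:hawkes-derivative}. Take $F=f(C)$ with $f$ measurable on path space, and compute all shifted evaluations, including the one at mark $0$, with this version.
\item Read the identity in its version-free form: $D_{t,\theta}F=0$ for a.e. $\theta>\lambda_t$, and $D_{t,\theta}F=D_{t,\bar\theta}F$ for a.e. $\theta,\bar\theta\le\lambda_t$. Your Mecke argument does transfer this form between a.s.-equal variables, and it is all that the proof of Lemma~\ref{lemma:derivetoH} really uses.
\end{itemize}
With either fix replacing your transfer claim, the proposal is correct.
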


Our goal is to combine this characterization with the
martingale representation theorem for $\mathbf N$ in order to deduce the
martingale representation property in the filtration $\mathbb F^C$ directly. 
We first recall the predictable representation property of the baseline
Poisson random measure; see, for instance, \cite{Privault_2009,Last2016}.

\begin{theorem}
\label{th:PRPgrosPoisson}
Let $M:=(M_t)_{t\in[0,T]}$ be a square-integrable
$\mathbb F^{\mathbf N}$-martingale. Then there exists a unique
$Z\in\mathbb L^2(\mathbf N;\mathbb F^{\mathbf N})$ such that
\[
M_t=M_0+\int_{(0,t]\times\mathbb R_+}
Z_{s,\theta}\,\widetilde{\mathbf N}(ds,d\theta),
\qquad t\in[0,T],\quad\P\text{-a.s.}
\]
%with 
and the integrand is $Z_{t,\theta}=\mathbb{E}\big[D_{t,\theta}M_T\,\big|\,\mathcal{F}^{\mathbf N}_{t-}\big],\qquad d\mathbb{P}\otimes dt\otimes d\theta\text{-a.e.}$
\end{theorem}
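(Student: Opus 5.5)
Theorem~\ref{th:PRPgrosPoisson} is the classical predictable representation property of a Poisson random measure. I would prove it in three steps: an isometry/density argument for existence and uniqueness, the Poisson Clark--Ocone formula for identifying $Z$, and a martingale argument to pass from terminal variables to processes.

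\textbf{Existence and uniqueness.} Consider the set $\mathcal H$ of all $F\in L^2(\Omega,\mathcal F_T^{\mathbf N},\P)$ that can be written as
\[
F=\E[F]+\int_{(0,T]\times\mathbb R_+}Z_{t,\theta}\,\widetilde{\mathbf N}(dt,d\theta)
\]
for some $Z\in\mathbb L^2(\mathbf N;\mathbb F^{\mathbf N})$. The It\^o isometry
\[
\E\Big[\Big|\int Z\,d\widetilde{\mathbf N}\Big|^2\Big]=\|Z\|^2_{\mathbb L^2(\mathbf N;\mathbb F^{\mathbf N})}
\]
shows two things. First, the integrand is unique: two representations of the same $F$ differ by a stochastic integral of zero $L^2$ norm, so their integrands agree $d\P\otimes dt\otimes d\theta$-a.e. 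Second, $\mathcal H$ is a closed subspace, since $\mathbb L^2(\mathbf N;\mathbb F^{\mathbf N})$ is complete.

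It therefore suffices to show that $\mathcal H$ contains a total family. I would use the exponentials $e^{-I(f)}$ with $f\ge 0$ bounded and supported in $(0,T]\times[0,K]$; these are total in $L^2$ by \cite[Lemma~18.4]{last2017penrose}, as recalled in the proof of Lemma~\ref{lemma:derivetoH}. Exactly as in that proof, the compensated exponential
\[
Y_t=\exp\Big(-I_t(f)+\int_0^t\int_{\mathbb R_+}(1-e^{-f})\,d\theta\,ds\Big)
\]
satisfies, by the It\^o formula for jumps,
\[
Y_T=1+\int Y_{t-}\big(e^{-f(t,\theta)}-1\big)\,\widetilde{\mathbf N}(dt,d\theta).
\]
The integrand is predictable and bounded by $e^{KT}\mathbf 1_{\{\theta\le K\}}$, hence lies in $\mathbb L^2(\mathbf N;\mathbb F^{\mathbf N})$. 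Since $e^{-I(f)}$ equals $Y_T$ times a deterministic constant, it belongs to $\mathcal H$, and so $\mathcal H=L^2(\Omega,\mathcal F_T^{\mathbf N},\P)$.

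\textbf{From terminal variables to martingales.} Given a square-integrable $\mathbb F^{\mathbf N}$-martingale $M$, apply the previous step to $F=M_T$. Then take conditional expectations given $\mathcal F_t^{\mathbf N}$: the stochastic integral is a martingale, so $M_t$ equals $M_0$ plus the integral over $(0,t]$, $\P$-a.s. Here $M_0=\E[M_T]$, because $\mathcal F_0^{\mathbf N}$ is trivial up to null sets.

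\textbf{Identification of the integrand.} Assume first that $M_T\in\mathbb D^{1,2}_{\mathbf N}$, which is the case in which the formula for $Z$ is meaningful. Take an arbitrary $u\in\mathbb L^2(\mathbf N;\mathbb F^{\mathbf N})$. On one hand, the integration-by-parts formula \eqref{eq:Poisson-IBP} gives
\[
\E\Big[M_T\int u\,d\widetilde{\mathbf N}\Big]=\E\Big[\int\!\!\int D_{t,\theta}M_T\,u_{t,\theta}\,d\theta\,dt\Big].
\]
On the other hand, the isometry applied to the representation gives $\E\big[\int\!\!\int Z\,u\,d\theta\,dt\big]$ for the same quantity. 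Since $u$ is predictable, we may replace $D_{t,\theta}M_T$ by its predictable projection, which is $\E[D_{t,\theta}M_T\mid\mathcal F^{\mathbf N}_{t-}]$. Letting $u$ range over all of $\mathbb L^2(\mathbf N;\mathbb F^{\mathbf N})$ then identifies $Z$ with this projection, $d\P\otimes dt\otimes d\theta$-a.e.

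\textbf{Main obstacle.} The delicate point is making the predictable projection of the random field $D_{t,\theta}M_T$ rigorous. One has to choose a $\widehat{\mathcal P}(\mathbb F^{\mathbf N})$-measurable version of $(t,\theta)\mapsto\E[D_{t,\theta}M_T\mid\mathcal F^{\mathbf N}_{t-}]$ and check its integrability. I would handle this with the Fubini-type results in \cite{last2017penrose}, which give exactly such a predictable version (the Clark--Ocone formula there).
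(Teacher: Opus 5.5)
Your existence and uniqueness argument is correct and is the standard one: It\^o isometry, closedness of $\mathcal H$, totality of the exponentials $e^{-I(f)}$, and the explicit stochastic-integral form of $Y_T$. The paper gives no proof of this theorem and only cites \cite{Privault_2009,Last2016}.

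The gap is in the identification of $Z$. The statement asserts $Z_{t,\theta}=\E[D_{t,\theta}M_T\mid\mathcal F^{\mathbf N}_{t-}]$ for \emph{every} square-integrable martingale. You prove it only when $M_T\in\mathbb D^{1,2}_{\mathbf N}$, and your reason for stopping there (that this is the case where the formula is meaningful) is not correct. The add-one cost $D_{t,\theta}F=F\circ\varepsilon^+_{t,\theta}-F$ is defined pathwise for any measurable $F$, and for $F\in L^2$ it is locally integrable. With $A_K:=(0,T]\times[0,K]$, Mecke's formula \eqref{eq:Mecke} gives
\[
\E\Big[\int_{A_K}|F\circ\varepsilon^+_{t,\theta}|\,d\theta\,dt\Big]=\E\big[|F|\,\mathbf N(A_K)\big]\le\|F\|_{L^2}\,\|\mathbf N(A_K)\|_{L^2}<\infty .
\]
The same identity shows that $DF$ does not depend on the chosen version of $F$. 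So $\E[D_{t,\theta}F\mid\mathcal F^{\mathbf N}_{t-}]$ makes sense for a.e.\ $(t,\theta)$ without any Sobolev regularity. This matters for the paper: in the proof of Theorem~\ref{thm:hawkes-mrp}, the identification is applied to an arbitrary $P_T\in L^2(\Omega,\mathcal F_T^C,\P)$, so the $\mathbb D^{1,2}_{\mathbf N}$ case alone would not support that argument.

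The missing step is a density argument, and your own tools provide it.
\begin{itemize}
\item Each $e^{-I(f)}$ lies in $\mathbb D^{1,2}_{\mathbf N}$, because $D_{t,\theta}e^{-I(f)}=e^{-I(f)}(e^{-f(t,\theta)}-1)$ is bounded by $\mathbf 1_{\{\theta\le K\}}$. So you can choose $F_n\in\mathbb D^{1,2}_{\mathbf N}$ with $F_n\to M_T$ in $L^2$.
\item By the isometry, the integrands $Z^n=\E[DF_n\mid\mathcal F^{\mathbf N}_{t-}]$ converge to $Z$ in $\mathbb L^2(\mathbf N;\mathbb F^{\mathbf N})$, hence in $L^1(\P\otimes dt\otimes d\theta)$ on each $A_K$.
\item The Mecke bound gives
\[
\E\Big[\int_{A_K}|D_{t,\theta}F_n-D_{t,\theta}M_T|\,d\theta\,dt\Big]\le\|F_n-M_T\|_{L^2}\big(\|\mathbf N(A_K)\|_{L^2}+KT\big)\longrightarrow 0 .
\]
Since conditional expectation is an $L^1$-contraction, $Z^n\to\E[DM_T\mid\mathcal F^{\mathbf N}_{t-}]$ in the same sense.
\item Identifying the two limits on each $A_K$ and letting $K\to\infty$ gives the formula for every square-integrable $M$.
\end{itemize}
The jointly measurable version you flagged as the main obstacle can be written explicitly using independence of Poisson increments. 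Freeze the configuration on $(0,t)\times\mathbb R_+$ and the inserted atom, and integrate $F$ against the Poisson law on $[t,T]\times\mathbb R_+$.
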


\begin{remark}
\label{rem:counterexample-N-martingale}
The preceding theorem does not guarantee that the integrand is $\mathbb F^C-$predictable for an arbitrary
$\mathbb F^{\mathbf N}$-martingale. Indeed, consider
\[
E_t
:=
\int_{(0,t]\times\mathbb R_+}
\mathbf 1_{\{\lambda_s<\theta\leq\lambda_s+1\}}
\,\widetilde{\mathbf N}(ds,d\theta),
\qquad t\in[0,T].
\]
Since the integrand is $\mathbb F^{\mathbf N}-$predictable and $\int_{\mathbb R_+}
\mathbf 1_{\{\lambda_s<\theta\leq\lambda_s+1\}}
\,d\theta=1,$ the process $E$ is a square-integrable
$\mathbb F^{\mathbf N}$-martingale and
$
\E[E_t^2]=t.$
However, $E$ only jumps at Poisson points $(s,\theta)$ satisfying
$\theta>\lambda_s$. Such points are rejected by the thinning
procedure and therefore do not produce jumps of $C$. Consequently,
$E$ is not adapted to the natural filtration $\mathbb F^C$. In particular, $E$ cannot admit a representation of the form
$
E_t
=
E_0+\int_0^t U_s\bigl(dC_s-\lambda_s\,ds\bigr)$
for some $\mathbb F^C$-predictable process $U$, since the right-hand
side can jump only at jump times of $C$, whereas \(E\) jumps only at
rejected Poisson points.
\end{remark}
Our objective is to establish the martingale representation form Theorem \ref{th:PRPgrosPoisson} for any square-integrable $\mathbb F^C-$martingale $M=(M_t)_{t\geq 0}$, without assuming that $M$ is also an $\mathbb F^{\mathbf N}$-martingale with an $\mathbb F^{\mathbf N}$ predictable integrand. To do so, we  use Malliavin calculus to characterize the integrand in the representation with respect to the compensated process $M^C=C-\int_0^. \lambda_s ds$.
The main result of this work  is the following theorem.

\begin{theorem}[Martingale representation property]
\label{thm:hawkes-mrp}

Let $P=(P_t)_{t\in[0,T]}$ be a square-integrable
$\mathbb F^C$-martingale. Then there exists a unique
$U\in L_C^2$ such that
\[
P_t=P_0+\int_0^t U_s\bigl(dC_s-\lambda_s\,ds\bigr),
\qquad t\in[0,T],\quad \mathbb P\text{-a.s..}
\]
Equivalently, 
$$
P_t
=
P_0+
\int_{(0,t]\times\mathbb R_+}
U_s\mathbf 1_{\{\theta\leq\lambda_s\}}
\,\widetilde{\mathbf N}(ds,d\theta).
$$
In particular, $P$ is also an  $\mathbb F^{\mathbf N}$-martingale and the integrand in the martingale representation with respect to  $\widetilde{\mathbf N}$ (see Theorem \ref{th:PRPgrosPoisson}) is given by %je pense qu'il faut insister sur le fait que c'est une  aussi une $\mathbb F^N$-martingale-  je mettrai:  and  
\[
Z_{s,\theta} =\E\left[
D_{s,\theta}P_T
|
\mathcal F_{s-}^{\mathbf N}
\right]=U_s\mathbf 1_{{\theta\leq\lambda_s}},
\quad
d\P\otimes ds\otimes d\theta\text{-a.e.}
.\]
%where $Z_{s,\theta}=\E\left[ D_{s,\theta}P_T |\mathcal F_{s-}^{\mathbf N} \right]$ is the integrand in martingale representation Theorem \ref{th:PRPgrosPoisson}.
\end{theorem}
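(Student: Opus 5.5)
The plan is to pass through the Poisson representation of Theorem~\ref{th:PRPgrosPoisson}, applied to the terminal value, and then use the structure of the Malliavin derivative of $\mathcal F_T^C$-measurable variables (Lemma~\ref{lem:hawkes-derivative}) to collapse the marked integrand.

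Set $F:=P_T\in L^2(\Omega,\mathcal F_T^C,\P)\subseteq L^2(\Omega,\mathcal F_T^{\mathbf N},\P)$. Let $M_t:=\E[F\mid\mathcal F_t^{\mathbf N}]$, which is a square-integrable $\mathbb F^{\mathbf N}$-martingale. Theorem~\ref{th:PRPgrosPoisson} gives a unique $Z\in\mathbb L^2(\mathbf N;\mathbb F^{\mathbf N})$ with $F=M_0+\int Z\,d\widetilde{\mathbf N}$. Here $M_0=\E[F]=P_0$, since $\mathcal F_0^{\mathbf N}$ and $\mathcal F_0^C$ are trivial up to null sets.

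The core step is to show $Z_{t,\theta}=\mathbf 1_{\{\theta\le\lambda_t\}}U_t$ with $U$ an $\mathbb F^C$-predictable process. If $F\in\mathbb D^{1,2}_{\mathbf N}$, this follows from the Clark--Ocone formula $Z_{t,\theta}=\E[D_{t,\theta}F\mid\mathcal F^{\mathbf N}_{t-}]$ together with Lemma~\ref{lem:hawkes-derivative}. Since $\lambda_t$ is $\mathcal F^{\mathbf N}_{t-}$-measurable, we get $Z_{t,\theta}=\mathbf 1_{\{\theta\le\lambda_t\}}\E[D_{t,0}F\mid\mathcal F_{t-}^{\mathbf N}]$. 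It then remains to replace $\mathcal F^{\mathbf N}_{t-}$ by $\mathcal F^C_{t-}$, and this is the step I expect to be the main obstacle.

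To handle it, I would avoid Malliavin regularity altogether and argue by projection in $L^2$. Let $\mathcal H:=\{\int\iota U\,d\widetilde{\mathbf N}:U\in L_C^2\}=\{\int_0^TU\,dM^C\}$, which is a closed subspace of $L^2(\Omega,\mathcal F_T^C,\P)$ because $\iota$ is an isometry. Decompose $F-\E F=\int\iota U\,d\widetilde{\mathbf N}+R$ with $R\perp\mathcal H$ and $R$ being $\mathcal F_T^C$-measurable, and aim to show $R=0$. Write $R=\int Z^R\,d\widetilde{\mathbf N}$ with $Z^R$ given by Theorem~\ref{th:PRPgrosPoisson}.

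First, $Z^R$ vanishes on $\{\theta>\lambda_t\}$. To see this, take a bounded $\mathbb F^{\mathbf N}$-predictable $u$ supported on the rejected region. Then $\E[R\int u\,d\widetilde{\mathbf N}]$ is zero: by a truncation $\varphi_n(R)$, as in Step~2 of Lemma~\ref{lemma:derivetoH}, we may apply \eqref{eq:Poisson-IBP}, and $D_{t,\theta}R=0$ there by Lemma~\ref{lem:hawkes-derivative}. Hence $\E[\int Z^Ru\,d\theta dt]=0$ for all such $u$, so $Z^R=0$ on the rejected region.

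Second, on the accepted region $Z^R$ must be orthogonal to all $\iota U$. Here I would use the mark-averaging argument of Lemma~\ref{lemma:derivetoH} (Step~1b via Mecke's formula \eqref{eq:Mecke}). It shows that $\E[R\int v\mathbf 1_{\{\theta\le\lambda\}}d\widetilde{\mathbf N}]$ depends on $v$ only through its mark-average $\bar v_t=\lambda_t^{-1}\int_0^{\lambda_t}v\,d\theta$. One then projects $\bar v$ onto $\mathcal P(\mathbb F^C)$; the key point is that $\lambda$ and the history of $C$ are all that $R$ sees. The hoped-for conclusion is that $Z^R\mathbf 1_{\{\theta\le\lambda\}}$ is orthogonal to every predictable $v$, so $R=0$.

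If that projection step resists, the fallback is a density argument. Prove the claim for $F$ in the dense class of $\mathcal F_T^C$-measurable variables of the form $\varphi(C_{t_1},\dots,C_{t_k})$ (bounded, and in $\mathbb D^{1,2}_C$), where I would try to show $\E[D_{t,0}F\mid\mathcal F^{\mathbf N}_{t-}]$ is $\mathcal F^C_{t-}$-measurable directly via a Markov-type functional identity for the thinning solution. Then extend to general $F$ by the isometry $\|\int U\,dM^C\|_{L^2}=\|U\|_{L^2_C}$, which makes $\mathcal H$ closed.

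Once $F=P_0+\int_0^TU\,dM^C$ is established, conditioning on $\mathcal F_t^C$ gives the formula for $P_t$, using that the stochastic integral is an $\mathbb F^C$-martingale. Uniqueness follows from the isometry. The $\mathbb F^{\mathbf N}$-martingale property of $P$ and the identity $Z=\iota U$ follow from the uniqueness in Theorem~\ref{th:PRPgrosPoisson}. The formula $\E[D_{s,\theta}P_T\mid\mathcal F^{\mathbf N}_{s-}]=U_s\mathbf 1_{\{\theta\le\lambda_s\}}$ then comes from the Clark--Ocone identification in that theorem.
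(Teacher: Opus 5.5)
Your reduction is sound up to the point you flag yourself. The Poisson representation of $P_T$, Clark--Ocone combined with Lemma~\ref{lem:hawkes-derivative} giving $Z_{t,\theta}=\mathbf 1_{\{\theta\le\lambda_t\}}\mathbb E[D_{t,0}P_T\mid\mathcal F^{\mathbf N}_{t-}]$, the vanishing of $Z^R$ on the rejected region, the closedness of $\mathcal H$, and the closing deductions (conditioning on $\mathcal F^C_t$, uniqueness, $Z=\iota U$) are all fine. The gap is that the decisive step is never proved: your projection route reduces back to the very claim it was meant to avoid. After mark-averaging you have
\[
\mathbb E\Big[R\int v\,\mathbf 1_{\{\theta\le\lambda\}}\,d\widetilde{\mathbf N}\Big]
=\mathbb E\Big[\int_0^T\bar v_t\,\lambda_t\,\mathbb E\big[D_{t,0}R\mid\mathcal F^{\mathbf N}_{t-}\big]\,dt\Big],
\]
and $R\perp\mathcal H$ makes this vanish only for $\mathbb F^C$-predictable $\bar v$. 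Replacing an $\mathbb F^{\mathbf N}$-predictable $\bar v$ by an $\mathbb F^C$-predictable projection is legitimate only if $\mathbb E[D_{t,0}R\mid\mathcal F^{\mathbf N}_{t-}]$ already has an $\mathcal F^C_{t-}$-measurable version. That is exactly the obstacle you set out to bypass; it is essentially the immersion of $\mathbb F^C$ in $\mathbb F^{\mathbf N}$, which the paper obtains as a consequence of the theorem, not as an input. The phrase ``$\lambda$ and the history of $C$ are all that $R$ sees'' is only a heuristic. $\mathcal F^C_T$-measurability of $R$ alone gives no control over conditioning on the larger past $\mathcal F^{\mathbf N}_{t-}$, especially since $D_{t,0}R=R\circ\varepsilon^+_{t,0}-R$ is not itself $\mathcal F^C_T$-measurable. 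Your fallback names a mechanism but leaves it at ``I would try''.

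What has to be supplied is an actual argument that $U_s:=\mathbb E[D_{s,0}P_T\mid\mathcal F^{\mathbf N}_{s-}]$ admits an $\mathbb F^C$-predictable version. The paper closes this step with the converse Lemma~\ref{lemma:derivetoH}, which your plan never invokes, by noting that $U_s$ itself satisfies \eqref{formuleDFmark}. The reasons are as follows.
\begin{itemize}
\item For $t<s$, the operator $D_{t,\theta}$ commutes with $\mathbb E[\,\cdot\mid\mathcal F^{\mathbf N}_{s-}]$.
\item $D_{t,\theta}D_{s,0}P_T=\mathbf 1_{\{\theta\le\lambda_t\}}D_{t,0}D_{s,0}P_T$, because a point inserted at time $t<s$ is accepted or rejected according to $\lambda_t$ whether or not $(s,0)$ has been added.
\end{itemize}
Lemma~\ref{lemma:derivetoH}, run on the horizon $[0,s)$, then yields $\mathcal F^C_{s-}$-measurability.

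Alternatively, your Markov-type idea can be made rigorous directly. For fixed $t$ there is a.s.\ no atom of $\mathbf N$ at time $t$. Pathwise uniqueness of the thinning equation restarted from a given history, together with measurability of its solution map, then shows that $P_T$ and $P_T\circ\varepsilon^+_{t,0}$ are measurable functions of $(C|_{[0,t)},\mathbf N|_{(t,T]\times\mathbb R_+})$. Since $\mathbf N|_{(t,T]\times\mathbb R_+}$ is independent of $\mathcal F^{\mathbf N}_{t-}$, the conditional expectation equals $\psi(t,C|_{[0,t)})$ for a jointly measurable $\psi$, and is therefore $\mathbb F^C$-predictable. Do this for bounded $P_T$ and extend by your isometry argument. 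With either argument in place, the rest of your proposal goes through.
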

\begin{proof}
Let $P=(P_t)_{t\in[0,T]}$ be a square-integrable
$\mathbb F^C$-martingale and define the
$\mathbb F^{\mathbf N}$-martingale $Q=(Q_t)_{t\in[0,T]}$ by

$$
Q_t
:=
\E\left[P_T|\mathcal F_t^{\mathbf N}\right],
\qquad t\in[0,T].
$$
By Theorem~\ref{th:PRPgrosPoisson}, there exists a unique
$Z\in \mathbb L^2(\mathbf N;\mathbb F^{\mathbf N})$ such that
\[
Q_t=Q_0+
\int_{(0,t]\times\mathbb R_+}
Z_{s,\theta}
\widetilde{\mathbf N}(ds,d\theta),
\qquad t\in[0,T].
\]
Moreover, $
Q_0=\E[P_T]=P_0.
$ By the
Clark-Ocone identification,

$$
Z_{s,\theta}
=
\E\left[
D_{s,\theta}P_T
|
\mathcal F_{s-}^{\mathbf N}
\right].
$$
Since $P_T$ is $\mathcal F_T^C$-measurable, Lemma~\ref{lem:hawkes-derivative} gives

$$
D_{s,\theta}P_T
=
\mathbf 1_{\{\theta\leq\lambda_s\}}D_{s,0}P_T.
$$
Since $\lambda_s$ is $\mathbb F^C$-predictable, it is in
particular $\mathcal F_{s-}^{\mathbf N}$-measurable. Hence
\[
Z_{s,\theta}=
\E\left[
\mathbf 1_{{\theta\leq\lambda_s}}D_{s,0}P_T|
\mathcal F_{s-}^{\mathbf N}
\right]=
\mathbf 1_{{\theta\leq\lambda_s}}
\E\left[
D_{s,0}P_T
|
\mathcal F_{s-}^{\mathbf N}
\right].
\]
We then define
\[
U_s
:=
\E\left[
D_{s,0}P_T
\mid
\mathcal F_{s-}^{\mathbf N}
\right].
\]
Thus 
\[
Z_{s,\theta}=U_s\mathbf 1_{{\theta\leq\lambda_s}},
\quad
d\P\otimes ds\otimes d\theta\text{-a.e.}
\]
Equivalently, since \(\lambda_s\geq\mu>0\),

$$
U_s
=
\frac1{\lambda_s}
\int_0^{\lambda_s}Z_{s,\theta}\,d\theta.
$$
Note that the process $U$ satisfies \eqref{formuleDFmark} in Lemma \ref{lemma:derivetoH} and so $U$ admits an
$\mathbb F^C$-predictable version. We keep the notation $U$ for
this version. Hence,

\[ Q_t=Q_0+
\int_{(0,t]\times\mathbb R_+}
Z_{s,\theta}
\widetilde{\mathbf N}(ds,d\theta)=Q_0 
+\int_{(0,t]\times\mathbb R_+}
U_s(dC_s-\lambda_s ds).
\]
Consequently, $Q_t$ is $\mathcal F^C_t-$measurable and since $
\mathcal F_t^C\subseteq\mathcal F_t^{\mathbf N},
$ the tower property gives

$$
\begin{aligned}
P_t
&=
\E[P_T\mid\mathcal F_t^C]
\\
&=
\E\left[
\E[P_T\mid\mathcal F_t^{\mathbf N}]
\mid\mathcal F_t^C
\right]
\\
&=
\E[Q_t\mid\mathcal F_t^C]\\
&=Q_t\\
&=
P_0+
\int_0^tU_s\bigl(dC_s-\lambda_s\,ds\bigr).
\end{aligned}
$$
Using the thinning identity once again, this is equivalently

\[
P_t
=
P_0+
\int_{(0,t]\times [0,K]}
U_s\mathbf 1_{\{\theta\leq\lambda_s\}}
\,\widetilde{\mathbf N}(ds,d\theta).
\]
It remains to verify the square-integrability of \(U\):
\[
\|U\|_{L_C^2}^2
=
\mathbb E\!\left[
\int_0^T\int_{\mathbb R_+}
|U_s|^2\mathbf 1_{\{\theta\le\lambda_s\}}\,d\theta\,ds
\right]
=
\|Z\|_{\mathbb L^2(\mathbf N;\mathbb F^{\mathbf N})}^2
<\infty.
\]

\end{proof}
\begin{remark}
In particular, 
    $$
U_s
=
\frac1{\lambda_s}
\int_0^{\lambda_s}Z_{s,\theta}\,d\theta.
$$
\end{remark}

%\begin{remark}
%Note from \eqref{PrepN} that $P$ is also a square-integrable \(\mathbb F^{\mathbf N}\)-martingale. \textcolor{blue}{explain intuition with enlargement of filtration}
%\tr{ Note from \eqref{PrepN}, which uses the thinning identity, $\mathbb F^C$-martingales remain martingales in the larger filtration  \(\mathbb F^{\mathbf N}\). 
%la je dois travailler un peu plus  pour verifier hyp de Jacod de enlargement of filtration car on n'a pas completement independance de la marque puisque  le support de sa loi uniforme depend de $\lambda_t$ qui est $\mathbb F^C$-previsible. }
%\end{remark}

\begin{remark}[Connection with filtration enlargement and immersion]
The inclusion $\mathbb{F}^{C}\subseteq\mathbb{F}^{\mathbf N}$
can be viewed as an enlargement of the observed filtration by
revealing the auxiliary marks and rejected Poisson points as
time evolves. In general, enlarging a filtration does not preserve
the martingale property: the additional information may change
the compensator and introduce a drift correction. For marked
point processes, \cite{GapeevJeanblancWu2025}
studies initial and progressive enlargements under Jacod's absolute
continuity hypothesis and derive explicit formulas for martingale
representations and their optional projections. 
Our framework is more complex as it is neither strictly initial nor progressive enlargement . In a sense, it combines successive progressive enlargements of filtration, driven by the rejected times, with successive initial enlargements,  incorporating the marks across all jump times (both retained and rejected).
Our approach, based on  the  thinning construction, yields in our setting the stronger property that
$\mathbb{F}^{C}$ is immersed in $\mathbb{F}^{\mathbf N}$ under
$\mathbb{P}$, also called $(H)$ Hypothesis. Indeed, Theorem~\ref{thm:hawkes-mrp}
gives, for every square-integrable $\mathbb{F}^{C}$-martingale $P$,
\[
P_t
=
P_0+\int_{(0,t]\times\mathbb{R}_+}
U_s\mathbf{1}_{\{\theta\leq\lambda_s\}}
\,\widetilde{\mathbf N}(ds,d\theta),
\]
where $U\in L_C^2$. The integrand is
$\mathbb{F}^{\mathbf N}$-predictable and square-integrable,
so $P$ remains a martingale in the larger filtration.
Equivalently, for every
$X\in L^2(\Omega,\mathcal F_T^C,\mathbb P)$,
\[
\mathbb E[X\mid\mathcal F_t^{\mathbf N}]
=
\mathbb E[X\mid\mathcal F_t^C],
\qquad t\in[0,T].
\]
Thus the additional Poisson information available at time $t$
does not improve the prediction of functionals of $C$ beyond its
observed history. This property follows from the fact that $\lambda$ is the intensity
of $C$ in both filtrations. Consequently, no separate
verification of Jacod's density hypothesis is needed for this
particular enlargement. The advantage of our approach is to
establish this compatibility directly on a fixed Poisson
reference space while identifying the representation integrand
in the smaller filtration through Malliavin calculus, whenever
the terminal variable has the required regularity. 
\end{remark}

\subsection{Application to general Hawkes process}\label{sec:application}

As an application, extending the linear Hawkes process in Example \ref{example} to a more general case, we derive a martingale representation theorem for a general Hawkes process from the thinning representation. Let $C=H$ be the Hawkes
process with empty initial history, constructed from the
Poisson random measure $\mathbf N$ through
\[
    H_t
    =
    \int_{(0,t]\times\mathbb R_+}
        \mathbf 1_{\{\theta\leq\lambda_s\}}
        \,\mathbf N(ds,d\theta),
    \qquad
    \lambda_t
    =
    \mu+\Psi\Big(\int_{(0,t)}\Phi(t-s)\,dH_s\Big),
\]
where $\mu>0$, $\Psi:\mathbb R\longrightarrow \mathbb R_+$ is Lipschitz and $\Phi:[0,T]\to\mathbb R$ is 
bounded with
\[
    \|\Psi\|_{\mathrm{Lip}}\times\|\Phi\|_{L^1([0,T])}<1,
\]
for the Lipschitz norm defined by $\|\Psi\|_{\mathrm{Lip}}
:=
\sup_{\substack{x,y\in \mathbb R\\x\neq y}}
\frac{|\Psi(x)-\Psi(y)|}{|x-y|}.$

\begin{corollary}[Martingale representation for Hawkes processes]
\label{cor:hawkes-mrt}
Every square-integrable $\mathbb F^H$-martingale
$P=(P_t)_{t\in[0,T]}$ admits the representation
\[
    P_t
    =
    P_0+\int_0^t U_s\,dM_s^H
    =
    P_0+\int_0^t U_s(dH_s-\lambda_s\,ds),
    \qquad t\in[0,T],
\]
up to indistinguishability, for a unique $U\in L_H^2$ with

\[
    \mathbb E[|P_T-P_0|^2]
    =
    \mathbb E\!\left[
        \int_0^T |U_s|^2\lambda_s\,ds
    \right].
\]
In particular, for every $F\in L^2(\Omega,\mathcal F_T^H,\mathbb P)$,
there is a unique $U\in L_H^2$ such that for any $t\in [0,T]$

\[
    \mathbb E[F\mid\mathcal F_t^{\mathbf N}]
    =
    \mathbb E[F\mid\mathcal F_t^H]
    =
    \mathbb E[F]+\int_0^t U_s\,dM_s^H.
\]
If, in addition, $F\in\mathbb D_{\mathbf N}^{1,2}$,
then the integrand is identified by
\[
    U_s
    =
    \mathbb E[D_{s,0}F\mid\mathcal F_{s-}^{\mathbf N}]
    =
    \mathbb E[D_{s,0}F\mid\mathcal F_{s-}^{H}],
    \qquad d\mathbb P\otimes ds\text{-a.e.},
\]
where the conditional expectations are understood in
their predictable versions.
\end{corollary}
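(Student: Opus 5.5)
The plan is to treat the Hawkes case as a direct instance of Theorem~\ref{thm:hawkes-mrp} with $C=H$. The work is therefore to check that the standing hypotheses hold and then read off the additional statements.

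\emph{Step 1: standing assumptions.} The intensity $\lambda_t=\mu+\Psi\bigl(\int_{(0,t)}\Phi(t-s)\,dH_s\bigr)$ is a measurable non-anticipative functional of $H|_{[0,t)}$. Since the integral runs over $(0,t)$, the process $s\mapsto \int_{(0,s)}\Phi(s-r)\,dH_r$ is left-continuous in $s$ only up to the regularity of $\Phi$. So I would argue predictability via the functional form $\Lambda(t,c|_{[0,t)})$ as in Remark~\ref{rem:path}: it is measurable for the predictable $\sigma$-field of the canonical filtration.

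Positivity $\lambda_t\ge\mu>0$ is immediate because $\Psi\ge0$.

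For $\sup_t\E[\lambda_t]<\infty$ I would use the Lipschitz bound $\lambda_t\le \mu+\Psi(0)+\|\Psi\|_{\rm Lip}\int_{(0,t)}|\Phi(t-s)|\,dH_s$. Taking expectations with the compensator gives the renewal-type inequality
\[
m(t)\le \mu+\Psi(0)+\|\Psi\|_{\rm Lip}\int_0^t|\Phi(t-s)|\,m(s)\,ds,\qquad m(t):=\E[\lambda_t].
\]
Since $\|\Psi\|_{\rm Lip}\|\Phi\|_{L^1}<1$, a Gronwall/renewal argument yields $\sup_{t\le T}m(t)\le(\mu+\Psi(0))/(1-\|\Psi\|_{\rm Lip}\|\Phi\|_{L^1})$. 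To make this rigorous I would first localize at the $n$-th jump time so that every quantity is finite, and then let $n\to\infty$.

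Pathwise existence and uniqueness of the thinning solution come from the Brémaud--Massoulié construction. One proceeds atom by atom: on $[0,T]\times[0,K]$ there are finitely many atoms, and between accepted points $\lambda$ is determined by the past. Non-explosion then follows from $\E[H_T]=\E\int_0^T\lambda_t\,dt<\infty$. I expect this step to be the main obstacle, since $\lambda$ is unbounded and one must exclude infinitely many acceptances in finite time. I would handle it by running the recursion jump by jump and using the integrability bound above, which forces $H_T<\infty$ almost surely.

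\emph{Step 2: representation and isometry.} Theorem~\ref{thm:hawkes-mrp} then gives a unique $U\in L^2_H$ with $P_t=P_0+\int_0^tU_s\,dM^H_s$. Both sides are càdlàg, so a.s. equality for each $t$ upgrades to indistinguishability. The isometry $\E|P_T-P_0|^2=\E\int_0^T|U_s|^2\lambda_s\,ds$ follows from $\langle M^H\rangle=\int_0^\cdot\lambda_s\,ds$, or equivalently from the isometry $\iota$ combined with the Poisson Itô isometry.

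For $F\in L^2(\Omega,\mathcal F^H_T,\P)$, apply the theorem to $P_t=\E[F\mid\mathcal F^H_t]$. The theorem shows that $P$ is an $\mathbb F^{\mathbf N}$-martingale with $P_T=F$, and hence $\E[F\mid\mathcal F_t^{\mathbf N}]=P_t=\E[F\mid\mathcal F_t^H]$.

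\emph{Step 3: identification of $U$.} If $F\in\mathbb D^{1,2}_{\mathbf N}$, the proof of Theorem~\ref{thm:hawkes-mrp} gives $U_s=\E[D_{s,0}F\mid\mathcal F^{\mathbf N}_{s-}]$, which has an $\mathbb F^H$-predictable version. To replace $\mathcal F^{\mathbf N}_{s-}$ by $\mathcal F^H_{s-}$, I would take the $\mathbb F^H$-predictable projection of both sides. Immersion, i.e.\ the (H) hypothesis established above, lets predictable projections onto $\mathbb F^H$ of $\mathbb F^{\mathbf N}$-predictable projections of $D_{\cdot,0}F$ coincide with the direct $\mathbb F^H$ one. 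Since $U$ is already $\mathbb F^H$-predictable, it equals its own projection, $d\P\otimes ds$-a.e.
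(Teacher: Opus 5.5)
Your proposal is correct and follows the paper's route: verify that the Hawkes intensity meets Assumption~\ref{ass:lambda} and the well-posedness requirements of Remark~\ref{rem:path}, then read everything off Theorem~\ref{thm:hawkes-mrp} with $C=H$ (the paper simply cites Br\'emaud--Massouli\'e where you sketch the localized renewal bound and the jump-by-jump construction). Two minor points: writing $P_0=\E[F]$ uses the triviality of $\mathcal F_0^H$, which the paper invokes explicitly and you leave implicit; and in Step~3 immersion is not needed, since $\mathcal F^H_{s-}\subseteq\mathcal F^{\mathbf N}_{s-}$ and the tower property alone give $\E\bigl[\E[D_{s,0}F\mid\mathcal F^{\mathbf N}_{s-}]\mid\mathcal F^H_{s-}\bigr]=\E[D_{s,0}F\mid\mathcal F^H_{s-}]$.
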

\begin{proof}
The Hawkes intensity is $\mathbb F^H$-predictable and
satisfies $\lambda_t\geq\mu>0$. Since $\Phi$ is bounded,
the thinning equation admits a pathwise unique,
non-explosive solution, see \cite[Theorem 1(a)]{bremaud1996stability}
\[
    \sup_{t\in[0,T]}\mathbb E[\lambda_t]<\infty.
\]
Hence Assumption~\ref{ass:lambda} and the construction requirements
of Remark~\ref{rem:path} are satisfied. The result follows
directly from Theorem~\ref{thm:hawkes-mrp} with $C=H$. Applying the theorem to
$P_t=\mathbb E[F|\mathcal F_t^H]$ and using the
$\mathbb P$-triviality of $\mathcal F_0^H$, which gives
$P_0=\mathbb E[F]$.
\end{proof}

\section{ Orthogonal martingale decomposition in the joint accepted-rejected filtration}\label{sec:ortho}
Finally, we show that the Poisson representation of Theorem \ref{th:PRPgrosPoisson} splits along the two regions of the thinning. To avoid degeneracy of the rejected region we assume, in addition to Assumption \ref{ass:lambda}, that there exists a constant $\bar\mu>0$ such that $\lambda_s\le\bar\mu$, $ds\otimes d\mathbb{P}$-a.e., and we define
\[
\bar C_t:=\int_{(0,t]\times[0,\bar\mu]}\mathbf{1}_{\{\theta>\lambda_s\}}\,\mathbf N(ds,d\theta),\text{ with }
\bar\lambda_s:=\bar\mu-\lambda_s,\text{ and }
M^{\bar C}_t:=\bar C_t-\int_0^t\bar\lambda_s\,ds,\qquad t\in[0,T].
\]
Since the integrand $\mathbf{1}_{[0,\bar\mu]}(\theta)\mathbf{1}_{\{\theta>\lambda_s\}}$ is bounded and $\mathbb{F}^{\mathbf N}$-predictable, $M^{\bar C}_t$ is a square-integrable $\mathbb{F}^{\mathbf N}$-martingale, that is, $\bar\lambda$ is the $\mathbb{F}^{\mathbf N}$-intensity of $\bar C$. The filtration %in which both processes $C,\bar C$ are observed is denoted by 
$\mathbb{K}=(\mathcal{K}_t)_{t\in[0,T]}$ denotes the usual augmentation of the filtration generated by $(C,\bar C)$, \textit{i.e.}\ the internal history of the two-type point process recording the accepted atoms and the rejected atoms with mark in $[0,\bar\mu]$. %, with $\mathcal{H}_0$ trivial. 
Then $\mathbb{F}^C\subseteq\mathbb{K}\subseteq\mathbb{F}^{\mathbf N}$, and $M^C$ and $M^{\bar C}$, being $\mathbb{F}^{\mathbf N}$-martingales adapted to $\mathbb{K}$, are square-integrable $\mathbb{K}$-martingales. We define
\begin{align*}
L^2_C(\mathbb{K})&:=\Big\{U:[0,T]\times\Omega\to\mathbb{R}\ \ \mathcal{P}(\mathbb{K})\text{-measurable}:\ \mathbb{E}\int_0^T|U_s|^2\lambda_s\,ds<\infty\Big\},\\
L^2_{\bar C}(\mathbb{K})&:=\Big\{\bar U:[0,T]\times\Omega\to\mathbb{R}\ \ \mathcal{P}(\mathbb{K})\text{-measurable}:\ \mathbb{E}\int_0^T|\bar U_s|^2\bar\lambda_s\,ds<\infty\Big\},
\end{align*}
identified up to $\lambda_s\,ds\,\mathbb{P}(d\omega)$-, respectively $\bar\lambda_s\,ds\,\mathbb{P}(d\omega)$-a.e.\ equality. Since the indicators $\mathbf{1}_{\{\theta\le\lambda_s\}}$ and $\mathbf{1}_{\{\lambda_s<\theta\le\bar\mu\}}$ have disjoint supports, the map
\[
(U,\bar U)\longmapsto J(U,\bar U)_{s,\theta}:=U_s\mathbf{1}_{\{\theta\le\lambda_s\}}+\bar U_s\mathbf{1}_{\{\lambda_s<\theta\le\bar\mu\}}
\]
is a linear isometry from $L^2_C(\mathbb{K})\times L^2_{\bar C}(\mathbb{K})$, equipped with the norm $\|U\|^2_{L^2_C(\mathbb{K})}+\|\bar U\|^2_{L^2_{\bar C}(\mathbb{K})}$, into $L^2(N;\mathbb{F}^{\mathbf N})$, and
$\int_{(0,t]\times\mathbb{R}_+}J(U,\bar U)_{s,\theta}\,\widetilde {\mathbf N}(ds,d\theta)=\int_0^tU_s\,dM^C_s+\int_0^t\bar U_s\,dM^{\bar C}_s$.

\begin{lemma}\label{lem:decomportho}
Let $\lambda$ satisfy Assumption \ref{ass:lambda} and $\lambda_s\le\bar\mu$, $ds\otimes d\mathbb{P}$-a.e., for some constant $\bar\mu>0$.
    Let $F$ be a $\mathcal{K}_T$-measurable random variable. Then
\begin{equation}\label{eq:twochannel}
D_{t,\theta}F=\mathbf{1}_{\{\theta\le\lambda_t\}}D_{t,0}F+\mathbf{1}_{\{\lambda_t<\theta\le\bar\mu\}}D_{t,\bar\mu}F,\qquad d\mathbb{P}\otimes dt\otimes d\theta\text{-a.e.}
\end{equation}
\end{lemma}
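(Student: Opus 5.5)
We argue pathwise, in the spirit of Lemma~\ref{lem:hawkes-derivative}, by splitting the mark axis into three regions and checking in each one how inserting a Poisson atom at $(t,\theta)$ changes the pair $(C,\bar C)$.

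First we reduce to functionals of the two paths. Since $F$ is $\mathcal K_T$-measurable, up to a null set $F=\Phi(C|_{[0,T]},\bar C|_{[0,T]})$ for a measurable $\Phi$ on the product of two spaces of counting paths. Because $D_{t,\theta}$ is defined on a $\mathbb P$-null set only up to the Mecke formula~\eqref{eq:Mecke}, we work with this version. Indeed, if $F=F'$ a.s., then by Mecke $F\circ\varepsilon^+_{t,\theta}=F'\circ\varepsilon^+_{t,\theta}$ for $d\mathbb P\otimes dt\otimes d\theta$-a.e.\ $(\omega,t,\theta)$. So it suffices to compare the configurations $(C,\bar C)(\omega^{t,\theta})$ and $(C,\bar C)(\omega)$ for a.e.\ $(\omega,t,\theta)$. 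We may also discard the null set where $t$ is already an atom time of $\omega$, and the null set where $\theta=\lambda_t$ or $\theta=\bar\mu$.

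The three regions are handled as follows.

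\emph{(a) Accepted region $\theta\le\lambda_t$.} Insertion does not change $\lambda_t$, because $\lambda$ is $\mathbb F^C$-predictable and hence depends only on $C|_{[0,t)}$. The new atom is therefore accepted, and $C(\omega^{t,\theta})$ is the unique solution of the thinning equation with an extra jump at $t$. By pathwise uniqueness this solution does not depend on $\theta\in[0,\lambda_t]$, exactly as in Lemma~\ref{lem:hawkes-derivative}. The rejected process is then determined by the old atoms $(s,\theta')$ with $s\neq t$ and the new intensity path: $\bar C(\omega^{t,\theta})_u=\sum \mathbf 1_{\{\lambda_s(\omega^{t,\theta})<\theta'\le\bar\mu\}}$. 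This is again independent of $\theta$, and the inserted atom itself does not enter $\bar C$. Hence $F\circ\varepsilon^+_{t,\theta}=F\circ\varepsilon^+_{t,0}$.

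\emph{(b) Rejected-but-recorded region $\lambda_t<\theta\le\bar\mu$.} The atom is rejected, so $C$ is unchanged, exactly as in the second part of the proof of Lemma~\ref{lem:hawkes-derivative}, and $\lambda$ is unchanged as well. Consequently $\bar C(\omega^{t,\theta})=\bar C(\omega)+\mathbf 1_{[t,T]}$, which is independent of $\theta$ in this range. Since $\lambda_t\le\bar\mu$, the mark $\bar\mu$ lies in the range, so $F\circ\varepsilon^+_{t,\theta}=F\circ\varepsilon^+_{t,\bar\mu}$.

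\emph{(c) Region $\theta>\bar\mu$.} Neither $C$ nor $\bar C$ is affected, so $D_{t,\theta}F=0$.

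Combining (a), (b) and (c) gives~\eqref{eq:twochannel}.

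The delicate points are the almost-everywhere bookkeeping and case (b) on the boundary set where $\lambda_t=\bar\mu$. On that set the middle interval is empty, so it causes no trouble. The main obstacle, though, is case (a): we must justify that the perturbed solution $C(\omega^{t,\theta})$ coincides for all accepted marks, and that the rejection status of the later atoms is recomputed from the same perturbed intensity. For this we invoke the pathwise uniqueness assumed in Remark~\ref{rem:path}. If needed, we fall back on the approximation argument via simple functionals of $(C,\bar C)$, as cited from \cite{HHKR} in the proof of Lemma~\ref{lem:hawkes-derivative}.
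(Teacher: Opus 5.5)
Your proposal is correct and follows essentially the same route as the paper. You fix a version $F=f(C,\bar C)$, split the mark axis into $\{\theta\le\lambda_t\}$, $\{\lambda_t<\theta\le\bar\mu\}$ and $\{\theta>\bar\mu\}$, and use pathwise uniqueness of the thinning equation to show that $(C,\bar C)(\omega^{t,\theta})$ coincides respectively with $(C,\bar C)(\omega^{t,0})$, with $(C,\bar C)(\omega^{t,\bar\mu})$, and with $(C,\bar C)(\omega)$. The one small difference is in the accepted case: you recompute $\bar C(\omega^{t,\theta})$ directly from the mark-independent perturbed intensity path, whereas the paper deduces $\bar C^{t,\theta}=\bar C^{t,0}$ from the identity $C^{t,a}_s+\bar C^{t,a}_s=\mathbf N\bigl((0,s]\times[0,\bar\mu]\bigr)+\mathbf 1_{\{t\le s\}}$; both arguments are valid.
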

\begin{proof}
Since $F$ is $\mathcal K_T$-measurable, we choose a version
$
F(\omega)=f\bigl(C(\omega),\bar C(\omega)\bigr),$
where $f$ is a measurable function of the two counting paths.
All shifted evaluations of $F$, including those at the fixed marks
$0$ and $\bar\mu$, are understood using this version. Recall that $
\lambda_s=\Lambda\bigl(s,C|_{[0,s)}\bigr),$ for some functional $\Lambda$. We set
\[
\omega^{t,\theta}:=\varepsilon^+_{t,\theta}(\omega),\quad 
C^{t,\theta}:=C(\omega^{t,\theta}),
\text{ and }
\bar C^{t,\theta}:=\bar C(\omega^{t,\theta}).
\]
For $d\mathbb P\otimes dt$-almost every $(\omega,t)$, the configuration
$\omega$ has no atom at time $t$. Inserting a point at time $t$ leaves
the history before $t$ unchanged. Hence
\[
C^{t,\theta}|_{[0,t)}=C|_{[0,t)},
\qquad
\lambda_t(\omega^{t,\theta})=\lambda_t(\omega).
\]
We distinguish three cases.

\medskip
\noindent
\textbf{Case 1: $\theta\le\lambda_t(\omega)$.}
Both insertions $(t,\theta)$ and $(t,0)$ are accepted. Thus, they
produce the same accepted history up to time $t$, with one additional
jump at $t$. For $a\in\{\theta,0\}$ and $s\ge t$,
\[
C_s^{t,a}
=
C_{t-}+1+
\int_{(t,s]\times[0,\bar\mu]}
\mathbf 1_{\{u\le\Lambda(r,C^{t,a}|_{[0,r)})\}}
\,\mathbf N(dr,du).
\]
The two processes therefore solve the same thinning equation,
with the same initial history and the same driving Poisson atoms.
Pathwise uniqueness gives
$C^{t,\theta}=C^{t,0}.$\\
Moreover, every Poisson atom in $[0,\bar\mu]$ is either accepted
or rejected. Consequently, for $a\in\{\theta,0\}$,
\[
C_s^{t,a}+\bar C_s^{t,a}
=
\mathbf N\bigl((0,s]\times[0,\bar\mu]\bigr)
+\mathbf 1_{\{t\le s\}},
\qquad s\in[0,T].
\]
Since the accepted processes coincide, the rejected processes also
coincide. Therefore,
\[
(C^{t,\theta},\bar C^{t,\theta})
=
(C^{t,0},\bar C^{t,0}),
\]
and the representation $F=f(C,\bar C)$ yields
\[
D_{t,\theta}F=D_{t,0}F.
\]

\medskip
\noindent
\textbf{Case 2: $\lambda_t(\omega)<\theta\le\bar\mu$.}
The inserted point is rejected. Indeed, the original path $C$
still solves the thinning equation driven by $\omega^{t,\theta}$,
because the additional atom contributes
\[
\mathbf 1_{\{\theta\le\Lambda(t,C|_{[0,t)})\}}=0.
\]
By pathwise uniqueness, $C^{t,\theta}=C.$
Since the intensity depends only on $C$, it also remains unchanged.
Thus, the only modification is one additional rejected point:
\[
\bar C_s^{t,\theta}
=
\bar C_s+\mathbf 1_{\{t\le s\}},
\qquad s\in[0,T].
\]
On this event, $\lambda_t(\omega)<\bar\mu$, so insertion at
$(t,\bar\mu)$ has exactly the same effect. Hence
\[
(C^{t,\theta},\bar C^{t,\theta})
=
(C^{t,\bar\mu},\bar C^{t,\bar\mu}),
\]
which implies $D_{t,\theta}F=D_{t,\bar\mu}F.$
\medskip

\noindent
\textbf{Case 3: $\theta>\bar\mu$.}
Since $\lambda_t\le\bar\mu$, the inserted point is rejected by $C$.
As in Case~2, pathwise uniqueness implies $C^{t,\theta}=C$.
The inserted point is also outside the strip counted by $\bar C$,
so $\bar C^{t,\theta}=\bar C$. Therefore, $
D_{t,\theta}F=0.$ 
\medskip

\noindent
Combining the three cases gives $
D_{t,\theta}F
=
\mathbf 1_{\{\theta\le\lambda_t\}}D_{t,0}F
+
\mathbf 1_{\{\lambda_t<\theta\le\bar\mu\}}D_{t,\bar\mu}F,
\qquad
d\mathbb P\otimes dt\otimes d\theta\text{-a.e.}.$
\end{proof}
\begin{theorem}[Martingale representation in the joint filtration]\label{thm:joint}
Let $\lambda$ satisfy Assumption \ref{ass:lambda} and $\lambda_s\le\bar\mu$, $ds\otimes d\mathbb{P}$-a.e., for some constant $\bar\mu>0$. Let $Q=(Q_t)_{t\in[0,T]}$ be a square-integrable $\mathbb{K}$-martingale. Then $Q$ is also a square-integrable $\mathbb{F}^{\mathbf N}$-martingale, and there exists a unique pair $(U,\bar U)\in L^2_C(\mathbb{K})\times L^2_{\bar C}(\mathbb{K})$ such that
\begin{equation}\label{eq:repG}
Q_t=Q_0+\int_0^tU_s\,\big(dC_s-\lambda_s\,ds\big)+\int_0^t\bar U_s\,\big(d\bar C_s-\bar\lambda_s\,ds\big),\qquad t\in[0,T],\ \mathbb{P}\text{-a.s.}
\end{equation}
Equivalently,
\[
Q_t=Q_0+\int_{(0,t]\times[0,\bar\mu]}\Big(U_s\mathbf{1}_{\{\theta\le\lambda_s\}}+\bar U_s\mathbf{1}_{\{\theta>\lambda_s\}}\Big)\,\widetilde {\mathbf N}(ds,d\theta).
\]
The integrand $Z$ of $Q$ from  Theorem \ref{th:PRPgrosPoisson} satisfies $Z=J(U,\bar U)$, $d\mathbb{P}\otimes ds\otimes d\theta$-a.e.; in particular $Z=0$ on $\{\theta>\bar\mu\}$ and
\[
U_s=\frac1{\lambda_s}\int_0^{\lambda_s}Z_{s,\theta}\,d\theta,\qquad
\bar U_s=\frac1{\bar\mu-\lambda_s}\int_{\lambda_s}^{\bar\mu}Z_{s,\theta}\,d\theta\quad\text{on }\{\lambda_s<\bar\mu\}.
\]
Moreover, $P:=\int_0^{\cdot}U_s\,dM^C_s$ and $\bar P:=\int_0^{\cdot}\bar U_s\,dM^{\bar C}_s$ are orthogonal square-integrable $\mathbb{K}$-martingales starting at zero, $Q=Q_0+P+\bar P$, and
\[
\mathbb{E}\big[|Q_T-Q_0|^2\big]=\mathbb{E}\big[\int_0^T|U_s|^2\lambda_s\,ds\big]+\mathbb{E}\big[\int_0^T|\bar U_s|^2\bar\lambda_s\,ds \big].
\]
\end{theorem}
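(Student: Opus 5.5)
I will mirror the proof of Theorem~\ref{thm:hawkes-mrp}, replacing Lemma~\ref{lem:hawkes-derivative} by its two-channel version, Lemma~\ref{lem:decomportho}. Given the square-integrable $\mathbb K$-martingale $Q$, I set $R_t:=\E[Q_T\mid\mathcal F_t^{\mathbf N}]$. By Theorem~\ref{th:PRPgrosPoisson}, $R_t=R_0+\int Z\,d\widetilde{\mathbf N}$ with $Z_{s,\theta}=\E[D_{s,\theta}Q_T\mid\mathcal F^{\mathbf N}_{s-}]$, and $R_0=\E[Q_T]=Q_0$ by triviality of $\mathcal K_0$. Since $Q_T$ is $\mathcal K_T$-measurable, Lemma~\ref{lem:decomportho} gives $D_{s,\theta}Q_T=\mathbf 1_{\{\theta\le\lambda_s\}}D_{s,0}Q_T+\mathbf 1_{\{\lambda_s<\theta\le\bar\mu\}}D_{s,\bar\mu}Q_T$. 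The indicators are $\mathcal F^{\mathbf N}_{s-}$-measurable because $\lambda$ is $\mathbb F^C$-predictable, so they pull out of the conditional expectation. This yields $Z=J(U',\bar U')$ with $U'_s=\E[D_{s,0}Q_T\mid\mathcal F^{\mathbf N}_{s-}]$ and $\bar U'_s=\E[D_{s,\bar\mu}Q_T\mid\mathcal F^{\mathbf N}_{s-}]$, and in particular $Z=0$ on $\{\theta>\bar\mu\}$. The averaging formulas for $U,\bar U$ follow by integrating $Z$ over $[0,\lambda_s]$ and $(\lambda_s,\bar\mu]$. A technical point is that the paper's Clark--Ocone identification is used for general $L^2$ terminal variables; I would either accept it as used in Theorem~\ref{thm:hawkes-mrp}, or argue first for $Q_T\in\mathbb D^{1,2}_{\mathbf N}$ and pass to the limit via the isometry.

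\textbf{Main obstacle: $\mathbb K$-predictability.} The integrands $U',\bar U'$ obtained above are only $\mathbb F^{\mathbf N}$-predictable, and I need $\mathbb K$-predictable versions. In Theorem~\ref{thm:hawkes-mrp} this step went through the converse Lemma~\ref{lemma:derivetoH}, so here I would first prove a two-channel analogue of it:
\[
\text{if } F\in L^2(\mathcal F_T^{\mathbf N}) \text{ satisfies \eqref{eq:twochannel}, then } F \text{ is } \mathcal K_T\text{-measurable.}
\]
The strategy copies Lemma~\ref{lemma:derivetoH}. Set $\overline F:=F-\E[F\mid\mathcal K_T]$ and test $\overline F$ against $e^{-I(f)}$. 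Atoms with $\theta>\bar\mu$ are removed by the compensated exponential $Y$ together with integration by parts, exactly as in Step~1a, since $D\overline F=0$ there. Marks inside $[0,\lambda_t]$ and inside $(\lambda_t,\bar\mu]$ are then averaged separately by Mecke's formula and a telescoping product, as in Step~1b, since $\overline F\circ\varepsilon^+$ is constant in $\theta$ on each region. This reduces the test to $\E[\overline F\,\widetilde A\,B]$, with $\widetilde A,B$ now $\mathcal K_T$-measurable, and that expectation vanishes. Truncation then handles the unbounded case. A delicate point is that $B$ must now be built from $\int_{\bar\mu}^\infty(1-a)\,d\theta$, which is deterministic, so the argument is simpler.

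With the converse lemma in hand, I would show that $R_t$ is $\mathcal K_t$-measurable, and hence $R=Q$ by the tower property. Applying the lemma at time $t$, with $T$ replaced by $t$, to the variable $R_t$ requires its derivative to have the two-channel form. This follows from $D_{s,\theta}R_t=\E[D_{s,\theta}Q_T\mid\mathcal F_t^{\mathbf N}]$ for $s<t$ (the standard commutation of $D$ with conditional expectation on Poisson space) and $D_{s,\theta}R_t=0$ for $s>t$. Alternatively, one could apply Lemma~\ref{lem:decomportho} and the converse lemma to $U$ itself, as done in Theorem~\ref{thm:hawkes-mrp}. I expect this step to be the most delicate, and I would first try the commutation route.

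Once predictability is established, the representation \eqref{eq:repG} follows from the stated identity for $\int J(U,\bar U)\,d\widetilde{\mathbf N}$. Uniqueness follows from the isometry of $J$ combined with the uniqueness in Theorem~\ref{th:PRPgrosPoisson}. The integrability of $(U,\bar U)$ comes from $\|J(U,\bar U)\|=\|Z\|<\infty$. Finally, orthogonality holds because $\langle P,\bar P\rangle=\int U\bar U\int\mathbf 1_{\{\theta\le\lambda\}}\mathbf 1_{\{\lambda<\theta\le\bar\mu\}}\,d\theta\,ds=0$, since the supports are disjoint, and the energy identity follows from the isometry.
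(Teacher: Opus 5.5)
Your skeleton matches the paper's proof step for step up to $Z=J(U',\bar U')$. You also flag correctly what the paper skips: its proof only produces $\mathbb F^{\mathbf N}$-predictable $U,\bar U$, then asserts without argument that $R_t$ is $\mathcal K_t$-measurable. Your two-channel converse of Lemma \ref{lemma:derivetoH} (deterministic $B$ built on $\{\theta>\bar\mu\}$, separate mark-averaging on $[0,\lambda_t]$ and $(\lambda_t,\bar\mu]$), combined with $D_{s,\theta}R_t=\mathbb E[D_{s,\theta}Q_T\mid\mathcal F^{\mathbf N}_t]$ for $s\le t$, is a sound way to justify that assertion. It gives $Q=R$ and the $\mathbb F^{\mathbf N}$-martingale property.

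\emph{The gap} is the step ``once predictability is established''. The commutation route only shows that $R$ is $\mathbb K$-adapted. The statement, however, requires $(U,\bar U)\in L^2_C(\mathbb K)\times L^2_{\bar C}(\mathbb K)$, i.e.\ $\mathcal P(\mathbb K)$-measurable integrands, and adaptedness of $R$ does not yield this by a soft argument.
\begin{itemize}
\item Matching jumps only shows that $U'_{T_n}=\Delta R_{T_n}$ is $\mathcal K_{T_n}$-measurable at an accepted time $T_n$. Upgrading this to a $\mathbb K$-predictable integrand needs the description of $\mathcal K_{T_n}$ and $\mathcal K_{T_n-}$ for marked point processes, which is exactly the Jacod-type machinery the approach wants to avoid.
\item Projecting $U',\bar U'$ onto $\mathcal P(\mathbb K)$ leaves a remainder $\mathbb K$-martingale orthogonal to all $\mathbb K$-integrals against $M^C,M^{\bar C}$. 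Showing that this remainder vanishes is the representation property itself.
\item Your fallback, applying Lemma \ref{lem:decomportho} to $U$ as in Theorem \ref{thm:hawkes-mrp}, fails as stated. The variable $D_{t,0}Q_T=f(C^{t,0},\bar C^{t,0})-f(C,\bar C)$ is not $\mathcal K_T$-measurable: the extra accepted point at $t$ changes the status of later atoms with marks between $\lambda_r$ and $\lambda_r\circ\varepsilon^+_{t,0}$, and $\mathbb K$ does not record those marks.
\end{itemize}

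\emph{What is missing} is a fixed-time statement: for $dt$-a.e.\ $t$, $U'_t$ and $\bar U'_t$ are a.s.\ equal to $\mathcal K_{t-}$-measurable variables. Note that for fixed $t$, $\mathcal K_t$ and $\mathcal K_{t-}$ differ only by null sets. Given this, the $\mathbb K$-predictable projections of $U',\bar U'$ coincide with them $dt\otimes d\mathbb P$-a.e., hence $\lambda\,dt\,d\mathbb P$- and $\bar\lambda\,dt\,d\mathbb P$-a.e., and these projections are the required versions. There are two ways to get the fixed-time statement.
\begin{itemize}
\item Within your framework, apply your converse lemma at horizon $t$ to $U'_t$. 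For $s<t$ one has $D_{s,\theta}U'_t=\mathbb E[D_{s,\theta}D_{t,0}Q_T\mid\mathcal F^{\mathbf N}_{t-}]$. The pathwise case analysis in the proof of Lemma \ref{lem:decomportho} applies to both $f(C^{t,0},\bar C^{t,0})$ and $f(C,\bar C)$, because an insertion at $s<t$ sees the same $\lambda_s$ in both configurations.
\item More simply, use the restarted pathwise uniqueness already invoked in Case~1 of that proof. For a.e.\ $t$, the pairs $(C^{t,0},\bar C^{t,0})$ and $(C^{t,\bar\mu},\bar C^{t,\bar\mu})$ are measurable functionals of $(C,\bar C)|_{[0,t)}$ and of $\mathbf N$ restricted to $(t,T]\times\mathbb R_+$. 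The latter is independent of $\mathcal F^{\mathbf N}_t$, so conditioning on $\mathcal F^{\mathbf N}_{t-}$ leaves a function of $(C,\bar C)|_{[0,t)}$.
\end{itemize}
The same freezing argument gives $R_t\in\mathcal K_t$ directly, so it could replace both your converse lemma and the commutation step.
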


\begin{proof}
The proof is similar to the proof of Theorem \ref{thm:hawkes-mrp} in the filtration $\mathbb K$.
Let $Q=(P_t)_{t\in[0,T]}$ be a square-integrable
$\mathbb K$-martingale and define the
$\mathbb F^{\mathbf N}$-martingale $R=(R_t)_{t\in[0,T]}$ by

$$
R_t
:=
\E\left[Q_T|\mathcal F_t^{\mathbf N}\right],
\qquad t\in[0,T].
$$
By Theorem~\ref{th:PRPgrosPoisson}, there exists a unique
$Z\in \mathbb L^2(\mathbf N;\mathbb F^{\mathbf N})$ such that
\[
R_t=R_0+
\int_{(0,t]\times\mathbb R_+}
Z_{s,\theta}
\widetilde{\mathbf N}(ds,d\theta),
\qquad t\in[0,T].
\]
Moreover, $
R_0=\E[Q_T]=Q_0.
$ By the
Clark-Ocone identification,

$$
Z_{s,\theta}
=
\E\left[
D_{s,\theta}Q_T
|
\mathcal F_{s-}^{\mathbf N}
\right].
$$
Since $Q_T$ is $\mathcal K_T$-measurable, Lemma \ref{lem:decomportho} together with \eqref{eq:twochannel}  applies to $Q_T$ gives
\[
Z_{t,\theta}=\mathbf{1}_{\{\theta\le\lambda_t\}}\,\mathbb{E}\big[D_{t,0}Q_T\,\big|\,\mathcal{F}^{\mathbf N}_{t-}\big]+\mathbf{1}_{\{\lambda_t<\theta\le\bar\mu\}}\,\mathbb{E}\big[D_{t,\bar\mu}Q_T\,\big|\,\mathcal{F}^{\mathbf N}_{t-}\big],
\]
where the last equality comes from the fact that $\lambda$ is $\mathbb{F}^C$-predictable, hence $\lambda_t$ is $\mathcal{F}^{\mathbf N}_{t-}$-measurable. We then define
\[
U_t:=\mathbb{E}\big[D_{t,0}Q_T\,\big|\,\mathcal{F}^{\mathbf N}_{t-}\big],\qquad
\bar U_t:=\mathbb{E}\big[D_{t,\bar\mu}Q_T\,\big|\,\mathcal{F}^{\mathbf N}_{t-}\big],\qquad t\in[0,T],
\]
two $\mathbb{F}^{\mathbf N}$-predictable processes which do not depend on $\theta$, and obtain
\[
Z_{t,\theta}=U_t\,\mathbf{1}_{\{\theta\le\lambda_t\}}+\bar U_t\,\mathbf{1}_{\{\lambda_t<\theta\le\bar\mu\}}=J(U,\bar U)_{t,\theta},\qquad d\mathbb{P}\otimes dt\otimes d\theta\text{-a.e.}
\]
In particular $Z=0$ on $\{\theta>\bar\mu\}$, and integrating this expression in $\theta$ over $[0,\lambda_t]$, respectively over $(\lambda_t,\bar\mu]$, yields
\[
U_t=\frac1{\lambda_t}\int_0^{\lambda_t}Z_{t,\theta}\,d\theta,\text{ and }
\bar U_t=\frac1{\bar\mu-\lambda_t}\int_{\lambda_t}^{\bar\mu}Z_{t,\theta}\,d\theta\quad\text{on }\{\lambda_t<\bar\mu\}.
\]
Note moreover that $\int_{\mathbb{R}_+}|Z_{t,\theta}|^2d\theta=|U_t|^2\lambda_t+|\bar U_t|^2\bar\lambda_t$, consequently
\[
\mathbb{E}\Big[\int_0^T|U_t|^2\lambda_t\,dt\Big]+\mathbb{E}\Big[\int_0^T|\bar U_t|^2\bar\lambda_t\,dt\Big]=\|Z\|^2_{L^2(\mathbf N;\mathbb{F}^{\mathbf N})}<\infty .
\]
Hence,
\[
R_t=Q_0+\int_0^tU_s\,\big(dC_s-\lambda_s\,ds\big)+\int_0^t\bar U_s\,\big(d\bar C_s-\bar\lambda_s\,ds\big),\qquad t\in[0,T],\ \mathbb{P}\text{-a.s.}\] or equivalently
\[
R_t=Q_0+\int_{(0,t]\times[0,\bar\mu]}\Big(U_s\mathbf{1}_{\{\theta\le\lambda_s\}}+\bar U_s\mathbf{1}_{\{\theta>\lambda_s\}}\Big)\,\widetilde {\mathbf N}(ds,d\theta).
\]
Consequently, $R_t$ is $\mathcal K_t-$measurable and since $
\mathcal K_t\subseteq\mathcal F_t^{\mathbf N},
$ the tower property gives
\begin{align*}
Q_t
&=
\E[Q_T\mid\mathcal K_t]
\\
&=
\E\left[
\E[Q_T\mid\mathcal K_t^{\mathbf N}]
\mid\mathcal K_t
\right]
\\
&=
\E[R_t\mid\mathcal K_t]\\
&=R_t\\
&=
Q_0+\int_0^tU_s\,\big(dC_s-\lambda_s\,ds\big)+\int_0^t\bar U_s\,\big(d\bar C_s-\bar\lambda_s\,ds\big).
\end{align*}
In particular, $Q$ is an $\mathbb F^{\mathbf N}-$martingale. We now turn to the uniqueness of the representation. Suppose \eqref{eq:repG} holds for two pairs $(U,\bar U)$ and $(U',\bar U')$ in $L^2_C(\mathbb{K})\times L^2_{\bar C}(\mathbb{K})$. Subtracting the two representations at $t=T$ and using the thinning identities for $M^C$ and $M^{\bar C}$
\[
0=\int_0^T(U_s-U'_s)\,dM^C_s+\int_0^T(\bar U_s-\bar U'_s)\,dM^{\bar C}_s
=\int_{(0,T]\times\mathbb{R}_+}J(U-U',\bar U-\bar U')_{s,\theta}\,\widetilde {\mathbf N}(ds,d\theta).
\]
By the It\^o isometry for $\widetilde {\mathbf N}$ and since  $\mathbf{1}_{\{\theta\le\lambda_s\}}\mathbf{1}_{\{\lambda_s<\theta\le\bar\mu\}}=0$,
\[
0=\mathbb{E}\int_0^T\!\!\int_{\mathbb{R}_+}\big|J(U-U',\bar U-\bar U')_{s,\theta}\big|^2\,d\theta\,ds
=\mathbb{E}\int_0^T|U_s-U'_s|^2\lambda_s\,ds+\mathbb{E}\int_0^T|\bar U_s-\bar U'_s|^2\bar\lambda_s\,ds .
\]
Hence $U=U',\;$ $\lambda_s\,ds\,\mathbb{P}(d\omega)$-a.e.\ and $\bar U=\bar U',\; $ $\bar\lambda_s\,ds\,\mathbb{P}(d\omega)$-a.e.

\end{proof}

\begin{remark}
Alternatively, we could introduce the unmarked counting process $N$ defined by $N_t=C_t+\bar C_t=\int_{\theta \in [0, \bar \mu]}  {\bf N}((0,t], d \theta) $ and derive a martingale representation with respect to its own filtration $\mathbb F^N$. This filtration is  included in $\mathbb K$. In this case, any $\mathbb F^N-$martingale can be decomposed as in \eqref{eq:repG} with $U = \bar U$.
\end{remark}

\footnotesize
\bibliographystyle{plain}

\begin{thebibliography}{10}

\bibitem{boel1975martingales}
R.~Boel, P.~Varaiya, and E.~Wong.
\newblock Martingales on jump processes. {I}: Representation results.
\newblock {\em SIAM Journal on control}, 13(5):999--1021, 1975.

\bibitem{bremaud1996stability}
P.~Br{\'e}maud and L.~Massouli{\'e}.
\newblock Stability of nonlinear {H}awkes processes.
\newblock {\em The Annals of Probability}, pages 1563--1588, 1996.

\bibitem{ching2006representation}
C.~Ching-Sung and P.A. Meyer.
\newblock Sur la repr{\'e}sentation des martingales comme int{\'e}grales
  stochastiques dans les processus ponctuels.
\newblock In {\em S{\'e}minaire de Probabilit{\'e}s IX Universit{\'e} de
  Strasbourg}, pages 226--236. Springer, 2006.

\bibitem{confortola2014backward}
F.~Confortola and M.~Fuhrman.
\newblock Backward stochastic differential equations associated to jump
  {M}arkov processes and applications.
\newblock {\em Stochastic Processes and their Applications}, 124(1):289--316,
  2014.

\bibitem{daley2008introduction}
D.J. Daley and D.~Vere-Jones.
\newblock {\em An introduction to the theory of point processes: volume {II}:
  general theory and structure}.
\newblock Springer, 2008.

\bibitem{Hawkes}
A.~G. Hawkes.
\newblock Spectra of some self-exciting and mutually exciting point processes.
\newblock {\em Biometrika}, 58(1):83--90, 1971.

\bibitem{hawkes1974cluster}
A.G. Hawkes and D.~Oakes.
\newblock A cluster process representation of a self-exciting process.
\newblock {\em Journal of Applied Probability}, 11(3):493--503, 1974.

\bibitem{HHKR}
C.~Hillairet, L.~Huang, M.~Khabou, and A.~R{\'e}veillac.
\newblock {\em ALEA, Lat. Am. J. Probab. Math. Stat 19}, pages 1293--1328.

\bibitem{hillairet2024poisson}
C.~Hillairet, T.~Peyrat, and A.~R{\'e}veillac.
\newblock A non-compensated {C}lark-{O}cone formula for functionals of counting
  processes.
\newblock {\em ESAIM: PS Volume 29, 2025}, pages 158--183, 2025.

\bibitem{hillairet2024chaotic}
C.~Hillairet and A.~R{\'e}veillac.
\newblock On the chaotic expansion for counting processes.
\newblock {\em Electronic Journal of Probability}, 29:1--33, 2024.

\bibitem{jacod1975multivariate}
J.~Jacod.
\newblock Multivariate {P}oint processes: predictable projection,
  {R}adon-{N}ikodym derivatives, representation of martingales.
\newblock {\em Zeitschrift f{\"u}r Wahrscheinlichkeitstheorie und verwandte
  Gebiete}, 31(3):235--253, 1975.

\bibitem{jacod1975MRP}
J.~Jacod.
\newblock Multivariate {P}oint processes: predictable projection,
  {R}adon-{N}ikodym derivatives, representation of martingales.
\newblock {\em Zeitschrift f{\"u}r Wahrscheinlichkeitstheorie und verwandte
  Gebiete}, 31(3):235--253, 1975.

\bibitem{jacod2013limit}
J.~Jacod and A.~Shiryaev.
\newblock {\em Limit theorems for stochastic processes}.
\newblock Springer Science \& Business Media, 2003.

\bibitem{Last2016}
G.~Last.
\newblock {\em Stochastic Analysis for Poisson Point Processes. In
  \textit{Stochastic Analysis for Poisson Point Processes: Malliavin Calculus,
  Wiener-It{\^o} Chaos Expansions and Stochastic Geometry} (Editors: {P}eccati,
  {G}. and {R}eitzner, M.)}, pages 1--36.
\newblock Springer International Publishing, 2016.

\bibitem{last2017penrose}
G.~Last and M.~Penrose.
\newblock Lectures on the {P}oisson process, 2017.

\bibitem{ogata1981lewis}
Y.~Ogata.
\newblock On {L}ewis' simulation method for {P}oint processes.
\newblock {\em IEEE transactions on information theory}, 27(1):23--31, 1981.

\bibitem{GapeevJeanblancWu2025}
G.V. Pavel, M.~Jeanblanc, and D.~Wu.
\newblock Projections in enlargements of filtrations under {Jacod}'s absolute
  continuity hypothesis for marked point processes.
\newblock {\em Journal of Theoretical Probability}, 38(4), 2025.

\bibitem{Privault_2009}
N.~Privault.
\newblock {\em Stochastic Analysis in Discrete and Continuous Settings}.
\newblock Springer Berlin Heidelberg, 2009.

\end{thebibliography}

\end{document}